\newtheorem{theorem}{Theorem}[section]
\newtheorem{cor}[theorem]{Corollary}
\newtheorem{lemma}[theorem]{Lemma}
\newtheorem{prop}[theorem]{Proposition}
\newenvironment{conj}{\preconj\rm}{\endpreconj}
\newenvironment{ex}{\preex\rm}{\endpreex}
\newenvironment{qn}{\preqn\rm}{\endpreqn}
\newenvironment{rk}{\prerk\rm}{\endprerk}
\newcommand{\pgl}{\mathop{\mathrm{PGL}}}
\newcommand{\gl}{\mathop{\mathrm{GL}}}
\newcommand{\gf}{\mathop{\mathrm{GF}}}
\newcommand{\psl}{\mathop{\mathrm{PSL}}}
\newcommand{\SL}{\mathop{\mathrm{SL}}}
\newcommand{\psu}{\mathop{\mathrm{PSU}}}
\newcommand{\pgaml}{\mathop{\mathrm{P}\Gamma\mathrm{L}}}
\newcommand{\Aut}{\mathop{\mathrm{Aut}}}
\newcommand{\Out}{\mathop{\mathrm{Out}}}
\begin{document}
\title{Groups generated by derangements}
\author{R. A. Bailey\footnote{School of Mathematics and Statistics, University of St Andrews, St Andrews, Fife KY16 9SS, UK},
Peter J. Cameron\footnotemark[\value{footnote}],\\
Michael Giudici\footnote{Centre for the Mathematics of Symmetry and Computation, University of Western Australia, Crawley, WA 6009, Australia}
and Gordon F. Royle\footnotemark[\value{footnote}]}
\date{}
\maketitle

\begin{abstract}
We examine the subgroup $D(G)$ of a transitive permutation group $G$ which is
generated by the derangements in $G$. Our main results bound the index of
this subgroup: we conjecture that, if $G$ has degree $n$ and is not a Frobenius
group, then $|G:D(G)|\leqslant\sqrt{n}-1$; we prove this except when $G$ is a
primitive affine group. For affine groups, we translate our conjecture into an
equivalent form regarding $|H:R(H)|$, where $H$ is a linear group on a finite
vector space and $R(H)$ is the subgroup of $H$ generated by elements having 
eigenvalue~$1$. 

If $G$ is a Frobenius group, then $D(G)$ is the Frobenius kernel, and so 
$G/D(G)$ is isomorphic to a Frobenius complement. We give some examples
where $D(G)\ne G$, and examine the group-theoretic structure of $G/D(G)$; in
particular, we construct groups $G$ in which $G/D(G)$ is not a Frobenius
complement.
\end{abstract}

\section{Introduction}

Jordan proved in 1872 that a finite transitive permutation group $G$ of degree
$n>1$ must contain a derangement (an element with no fixed points). The
existence of such elements is important in various contexts in number theory
and elsewhere \cite{fks,serre,zantema}. It is known that there must be
many derangements (at least $|G|/n$, see \cite{cc}), and that at least one
has prime power order \cite{fks}. We are interested here in the subgroup
$D(G)$ of $G$ generated by the derangements in $G$.

In most cases, $D(G)=G$. For example, of the $3302368$ transitive groups of
degree from $2$ to $47$ inclusive as classified in \cite{HR} and available in \textsc{Magma} \cite{Magma}, only $892$ have $D(G)\ne G$ (of which $103$
are Frobenius groups); and, of the $24558$ primitive groups of degree from $2$ to
$4095$ inclusive as classified in \cite{CQRD} and available in \textsc{Magma}, only $9155$ have $D(G)\ne G$ (of which $7872$ are Frobenius groups).

The question was first considered by H.~Zantema~\cite{zantema}, who proved
the first two parts of the following theorem. We include the proof since we
extend the ideas to prove the rest of the theorem.

\begin{theorem}\label{thm1}
Let $G$ be a transitive permutation group on $\Omega$, and $N=D(G)$ the
(normal) subgroup generated by the derangements in $G$. Then
\begin{enumerate}\itemsep0pt
\item $N$ is transitive.
\item $N$ contains every element of $G$ whose number of fixed points is
different from $1$.
\item If $r_G$ and $r_N$ denote the permutation ranks of $G$ and $N$, then
\[r_N-1 = (r_G-1)|G:N|.\]
\item The $N$-orbits on ordered pairs of distinct elements are permuted
semiregularly by $G/N$; equivalently, for $\alpha\in\Omega$, the
$N_\alpha$-orbits different from $\{\alpha\}$ are permuted semiregularly
by~$G_\alpha/N_\alpha$.
\end{enumerate}
\label{t:basic}
\end{theorem}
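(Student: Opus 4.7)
The plan is to follow Zantema's counting strategy for parts (a) and (b), and then to extract (c) and (d) by comparing the standard orbit-counting identities for $G$ and $N$.

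For (a) I would argue by contradiction. Suppose $N$ has $k\geq 2$ orbits on $\Omega$. Since $N$ is normal in $G$ and $G$ is transitive, these orbits are permuted transitively by $G$, giving a transitive $G$-action of degree $k$. Jordan's theorem then supplies an element $g\in G$ fixing no $N$-orbit; but such a $g$ has no fixed point in $\Omega$ either (a fixed point would force its $N$-orbit to be $g$-invariant), so $g$ is a derangement and therefore $g\in N$. This contradicts the fact that $N$ preserves each of its own orbits setwise.

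For (b) I would apply Burnside's lemma to both $G$ and $N$: using (a), $\sum_{g\in G}|\mathrm{Fix}(g)|=|G|$ and $\sum_{n\in N}|\mathrm{Fix}(n)|=|N|$. By the very definition of $N$, every derangement of $G$ lies in $N$, so every element of a coset $gN$ with $g\notin N$ has at least one fixed point; summing over the $|G:N|-1$ such cosets contributes at least $|G|-|N|$, which combined with the $|N|$ from $N$ itself exactly saturates $|G|$. Equality is forced, so every $g\in G\setminus N$ has exactly one fixed point. Part (c) then drops out of the second Burnside identity $|G|r_G=\sum_{g\in G}|\mathrm{Fix}(g)|^2$ and its analogue for $N$: by (b) each $g\in G\setminus N$ satisfies $|\mathrm{Fix}(g)|^2=|\mathrm{Fix}(g)|=1$, so $|G|r_G-|N|r_N=|G|-|N|$, which rearranges to $|N|(r_N-1)=|G|(r_G-1)$.

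For (d) the first formulation becomes arithmetic: since $N\triangleleft G$, each $G$-orbit on ordered pairs of distinct points partitions into $N$-orbits of a common size, and (c) forces each of the $r_G-1$ $G$-orbits to split into exactly $|G:N|$ $N$-orbits. As $G/N$ acts transitively on the $|G:N|$ pieces of a single $G$-orbit, the action is regular and in particular semiregular. The equivalent pointwise formulation follows from the Frattini-type identity $G=NG_\alpha$ (valid because $N$ is transitive by (a)), which yields $G/N\cong G_\alpha/N_\alpha$, together with the standard bijection between $N$-orbits on ordered pairs with first coordinate $\alpha$ and $N_\alpha$-orbits on $\Omega$, under which the diagonal corresponds to the trivial orbit $\{\alpha\}$. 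No step here looks delicate once (a)--(c) are in hand; the only potential snag is to confirm that the $|G:N|$ $N$-orbits inside a $G$-orbit are permuted \emph{transitively} by $G/N$ (which is immediate from transitivity of $G$ on the $G$-orbit), so that transitivity plus matching cardinalities upgrades to regularity.
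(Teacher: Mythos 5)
Your proposal is correct, and parts (b), (c) and (d) run exactly as in the paper: the first-moment (Burnside) identity forces every element outside $N$ to have exactly one fixed point, the second-moment identity gives $|G|(r_G-1)=|N|(r_N-1)$, and the count $r_N-1=(r_G-1)|G:N|$ upgrades the transitive action of $G/N$ on the $N$-suborbits inside each $G$-orbit to a regular one. Where you genuinely diverge is part (a). The paper proves (a) and (b) in a single stroke: writing $\sum_{g\in G}(\pi(g)-1)=0$ and $\sum_{g\in N}(\pi(g)-1)=(k-1)|N|$, it observes that every term of $\sum_{g\in G\setminus N}(\pi(g)-1)=-(k-1)|N|$ is non-negative (all derangements lie in $N$), so both sides vanish, giving $k=1$ and (b) simultaneously. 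You instead prove (a) first by applying Jordan's theorem to the induced transitive action of $G$ on the $k\geqslant 2$ orbits of the normal subgroup $N$: an element fixing no $N$-orbit is a derangement on $\Omega$, hence lies in $N$, yet $N$ stabilises each of its orbits --- a contradiction. This is valid (and a nice structural argument), but note that it quietly reuses the same orbit-counting inequality, since Jordan's theorem is itself proved that way; the paper's version is marginally more economical because one sign analysis delivers (a) and (b) together, whereas you need a separate counting pass for (b) once (a) is in hand. Your derivation of (b) from (a) is in fact the ``alternative derivation'' the paper records after the main proof (via its Lemma on cosets), just summed over all of $G\setminus N$ at once rather than coset by coset.
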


Any Frobenius group $G$ gives an example with $D(G)\ne G$; for in this case
$D(G)$ is the Frobenius kernel, and its index is the order of a point
stabiliser. (This corresponds to the case in Theorem~\ref{t:basic} where
$N_\alpha=\{1\}$.) So, in a sharply $2$-transitive group of degree $n$, we have
$|G:D(G)|=n-1$. On the other hand, by part (d) of the theorem, the index
cannot be larger than $n-1$ (and indeed divides $n-1$), where $n=|\Omega|$.
Equality implies that
$r_G=2$ (so that $G$ is $2$-transitive), and $r_N=n$ (so that $N$ is regular,
and $G$ is a Frobenius group). So:

\begin{cor}\label{cor1}
If $G$ is a transitive permutation group of degree $n>1$, then $|G:D(G)|$
divides $n-1$; equality is possible if and only if $n$ is a prime power.
\end{cor}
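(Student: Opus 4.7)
\medskip

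My plan is to split the corollary into three pieces: divisibility, the forward direction of equality, and the converse, with most of the work leaning on part (d) of Theorem~\ref{t:basic} that has just been established.

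For divisibility, I would fix a point $\alpha\in\Omega$ and look at the $N_\alpha$-orbits on $\Omega\setminus\{\alpha\}$. By part (d), the group $G_\alpha/N_\alpha$ permutes these orbits semiregularly, and since $N$ is transitive (part (a)) we have $|G_\alpha:N_\alpha|=|G:N|$. Orbits in a common $G_\alpha/N_\alpha$-orbit have the same length, so each $G_\alpha$-orbit on $\Omega\setminus\{\alpha\}$ is a union of $|G:N|$ $N_\alpha$-orbits of a common size. Summing these sizes shows that $|G:N|$ divides $n-1$.

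For the equality case, I would reuse the observation recorded just before the corollary statement: if $|G:N|=n-1$ then, from part (c), one forces $r_G=2$ and $r_N=n$, so $G$ is $2$-transitive with $N$ a regular normal subgroup and hence a Frobenius group; thus $G$ is sharply $2$-transitive. It is then classical (Zassenhaus) that the regular normal subgroup of a finite sharply $2$-transitive group is elementary abelian, so $n$ is a prime power. Invoking Zassenhaus is the only non-self-contained step, and it is the point that really encodes the prime-power conclusion; it is the likeliest place where the reader needs to accept an outside reference.

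For the converse, I would exhibit $G=\agl(1,q)$ on $\gf(q)$ for any prime power $q=n$: this group is sharply $2$-transitive, its derangements are precisely the non-identity translations together with certain affine maps without fixed points, and a direct check shows that $D(G)$ equals the translation subgroup, giving $|G:D(G)|=q-1=n-1$. This completes both directions with no new calculation beyond what the theorem supplies.
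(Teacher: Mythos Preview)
Your argument is correct and mirrors the paper's own reasoning: the paper also deduces divisibility from part~(d), derives $r_G=2$ and $r_N=n$ in the equality case to conclude that $G$ is a $2$-transitive Frobenius group, and points to sharply $2$-transitive groups of prime-power degree for the converse. Two small remarks: in $\agl(1,q)$ the derangements are \emph{exactly} the non-identity translations (every map $x\mapsto ax+b$ with $a\ne1$ has the fixed point $b/(1-a)$), so your phrase ``together with certain affine maps without fixed points'' is vacuous; and once you already have a regular normal subgroup $N$ in a $2$-transitive group you do not need Zassenhaus---the transitivity of $G_\alpha$ on $N\setminus\{1\}$ forces all non-identity elements of $N$ to have the same prime order, and $Z(N)\trianglelefteq G$ together with primitivity gives $N=Z(N)$ elementary abelian.
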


We also obtain the following corollary.

\begin{cor}
Let $G$ be a transitive permutation group, and suppose that $D(G)\ne G$. Let
$G_\alpha$ be the stabiliser of $\alpha$, acting on the remaining points.
Then at least half the elements of $G_\alpha$ are derangements, and 
$G_\alpha=D(G_\alpha)$.
\label{c:stab}
\end{cor}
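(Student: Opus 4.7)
The plan is to let $N = D(G)$ and use parts~(a) and~(b) of Theorem~\ref{t:basic} to pin down the derangements of $G_\alpha$ on $\Omega \setminus \{\alpha\}$ exactly. Since $N$ is transitive by~(a), the orbit--stabiliser theorem gives $|G_\alpha : N_\alpha| = |G : N|$, and the standing hypothesis $D(G) \neq G$ makes this index at least~$2$.

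The next step is to identify which elements of $G_\alpha$ are derangements on the remaining points, i.e.\ fix only $\alpha$. Every element of $N_\alpha$ fixes $\alpha$ and, lying in $N$, by~(b) cannot have exactly one fixed point, so it must fix at least two points and is therefore not a derangement on $\Omega \setminus \{\alpha\}$. Conversely, every element of $G_\alpha \setminus N_\alpha$ fixes $\alpha$ but lies outside $N$, so by~(b) its number of fixed points is exactly one, namely $\alpha$, making it a derangement. Hence the set of derangements of $G_\alpha$ on $\Omega \setminus \{\alpha\}$ is precisely $G_\alpha \setminus N_\alpha$, which forms a proportion $1 - 1/|G:N| \geq 1/2$ of $G_\alpha$, giving the first assertion.

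For the equality $G_\alpha = D(G_\alpha)$, the characterisation just established gives $D(G_\alpha) = \langle G_\alpha \setminus N_\alpha \rangle$, and I would argue by contradiction: were this a proper subgroup of $G_\alpha$, Lagrange's theorem would force $|D(G_\alpha)| \leq |G_\alpha|/2$, whereas it contains the $|G_\alpha|(1 - 1/|G:N|)$ elements of $G_\alpha \setminus N_\alpha$. When $|G:N| \geq 3$ this is already a strict contradiction; the main subtlety is the borderline case $|G:N| = 2$, where the two counts coincide and one must also rule out the possibility $D(G_\alpha) = G_\alpha \setminus N_\alpha$. But this remaining case is immediate: a single non-identity coset of $N_\alpha$ does not contain the identity and so cannot be a subgroup. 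Hence $D(G_\alpha) = G_\alpha$.
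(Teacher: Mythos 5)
Your argument is essentially the paper's: the derangements of $G_\alpha$ on $\Omega\setminus\{\alpha\}$ contain the complement $G_\alpha\setminus N_\alpha$, which has at least $|G_\alpha|/2$ elements because $|G_\alpha:N_\alpha|=|G:N|\geqslant 2$ by transitivity of $N$, and a group is generated by the complement of any proper subgroup. However, one step in your write-up is false, even though it is not needed for the conclusion: you claim that every element of $N_\alpha$ fixes at least two points ``by~(b)''. Part~(b) of Theorem~\ref{t:basic} is a one-way implication --- if $\pi(g)\neq 1$ then $g\in N$ --- and its converse fails: elements of $N_\alpha$ may well fix only $\alpha$. (For instance, in the degree-$q^2$ example of Section~3.2, the map $x\mapsto ax$ with $a\in A$, $a\neq 1$, lies in $D(G)_0$ and fixes only $0$.) So your ``precise characterisation'' of the derangements of $G_\alpha$ as exactly $G_\alpha\setminus N_\alpha$ is wrong; the correct statement is only the containment $G_\alpha\setminus N_\alpha\subseteq\{\text{derangements of }G_\alpha\}$. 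Fortunately both of your conclusions survive: the counting bound needs only the containment, and for the second assertion you get $D(G_\alpha)\geqslant\langle G_\alpha\setminus N_\alpha\rangle$, and your Lagrange/identity-coset argument (which is just the standard proof that the complement of a proper subgroup generates) shows $\langle G_\alpha\setminus N_\alpha\rangle=G_\alpha$. So the proof is correct once the spurious ``conversely exact'' claim is deleted and the equalities are weakened to containments.
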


It follows, for example, that if $G$ is a Zassenhaus group (a $2$-transitive
group in which the point stabiliser is a Frobenius group) then $D(G)=G$.

\medskip

Our main interest is in proving better bounds in the case when $G$ is not a
Frobenius group. We prove the following two theorems:

\begin{theorem}\label{imprim}
If $G$ is a transitive imprimitive permutation group of degree $n$, then 
$|G:D(G)|\leqslant\sqrt{n}-1$. Equality is possible if $n$ is an even power
of a prime.
\label{t:imprim}
\end{theorem}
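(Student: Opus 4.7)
The plan is to fix any non-trivial block system $\mathcal{B}$ for $G$, with $k$ blocks of size $b$ (so $bk=n$ and $b,k\ge 2$), and to prove the two complementary bounds $|G:D(G)|\le b-1$ and $|G:D(G)|\le k-1$. Since $\min(b,k)^2\le bk=n$, combining these yields $|G:D(G)|\le\min(b,k)-1\le\sqrt{n}-1$.

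The first bound follows directly from Theorem~\ref{t:basic}(d), which implies that every non-trivial $G_\alpha$-orbit on $\Omega$ is a disjoint union of $|G:D(G)|$ equal-sized $N_\alpha$-orbits, and so has size divisible by $|G:D(G)|$. Fixing $\alpha$ with block $B$, the set $B\setminus\{\alpha\}$ is $G_\alpha$-invariant (since $G_\alpha\le G_B$) and has size $b-1$, so $b-1$ is a sum of multiples of $|G:D(G)|$. Hence $|G:D(G)|$ divides $b-1$, and in particular $|G:D(G)|\le b-1$.

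For the second bound let $\bar G$ be the transitive action on $\mathcal{B}$ and let $K\trianglelefteq G$ be its kernel. Corollary~\ref{cor1} applied to $\bar G$ (of degree $k\ge 2$) gives $|\bar G:D(\bar G)|\le k-1$, so the task reduces to showing $|G:D(G)|\le|\bar G:D(\bar G)|$. The main obstacle, to my mind, is the intermediate lemma $K\le D(G)$. The argument I have in mind: Jordan's theorem supplies a derangement $\bar\delta\in\bar G$, and any lift $\delta\in G$ fixes no block and hence no point, so $\delta$ is a derangement of $G$. For an arbitrary $\kappa\in K$, the element $\delta^{-1}\kappa$ projects to the derangement $\bar\delta^{-1}$ of $\bar G$, so $\delta^{-1}\kappa$ is also a derangement of $G$; therefore $\kappa=\delta\cdot(\delta^{-1}\kappa)\in D(G)$. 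Combined with $\overline{D(G)}\supseteq D(\bar G)$ (since lifts of derangements of $\bar G$ are derangements of $G$), this gives $|G:D(G)|=|\bar G:\overline{D(G)}|\le|\bar G:D(\bar G)|\le k-1$.

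For the equality statement when $n=q^2$ with $q$ a prime power, I would construct $G$ as a wreath-type product in which both the action of $\bar G$ on $\mathcal{B}$ and of $G_B$ on $B$ realize the sharply $2$-transitive group $\agl(1,q)$ on $q$ points. The equality case of Corollary~\ref{cor1} applied to each of these quotients then delivers equality $|G:D(G)|=q-1=\sqrt{n}-1$ in both derived bounds simultaneously.
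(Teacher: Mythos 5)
Your proof of the inequality is correct, and its second half takes a genuinely different route from the paper. Both arguments obtain $|G:D(G)|\mid b-1$ in the same way, from Theorem~\ref{t:basic}(d) applied to the $G_\alpha$-invariant set $B\setminus\{\alpha\}$. For the bound in terms of the number of blocks, however, the paper stays purely arithmetic: $|G:D(G)|$ divides $n-1$ by Corollary~\ref{cor1} and divides $b-1$, hence divides $n-b=b(k-1)$, and being coprime to $b$ it divides $k-1$. You instead prove the structural fact that the kernel $K$ of the action on blocks lies in $D(G)$ (via the trick $\kappa=\delta\cdot(\delta^{-1}\kappa)$ with $\delta$ a lift of a block-derangement) and then apply Corollary~\ref{cor1} to $\bar G$. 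Your route costs an extra lemma but yields more: $K\leqslant D(G)$ and $G/D(G)$ is a quotient of $\bar G/D(\bar G)$. The paper's is shorter and gives the same divisibility conclusion.

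The equality construction, by contrast, has a genuine gap. Knowing that $\bar G$ on $\mathcal{B}$ and $G_B$ on $B$ are both sharply $2$-transitive does not control $D(G)$ from above: your own chain gives $|G:D(G)|=|\bar G:\overline{D(G)}|\leqslant|\bar G:D(\bar G)|=q-1$, and the equality case of Corollary~\ref{cor1} for $\bar G$ only pins down the right-hand end, while $\overline{D(G)}$ may strictly contain $D(\bar G)$. Indeed the most natural group fitting your description, the full imprimitive wreath product $\agl(1,q)\wr\agl(1,q)$, satisfies $D(G)=G$: every element whose top part is a block-derangement is a derangement of $G$, and multiplying two such with inverse top parts shows the whole base group lies in $D(G)$; then any top element $\sigma$ fixing exactly one block $i_0$ becomes a derangement after inserting a nontrivial translation in coordinate $i_0$, so the top group lies in $D(G)$ as well. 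To achieve equality you must take a much thinner group in which every element outside the intended $D(G)$ fixes exactly one point. The paper uses the Frobenius group $\gf(q)^2\rtimes\gf(q)^{*}$ with scalar action, whose blocks are the cosets of a line; there $D(G)$ is the translation group, of index $q-1=\sqrt{n}-1$.
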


\begin{theorem}
If $G$ is a primitive permutation group of degree $n$ which is not of affine
type, then $|G:D(G)|\leqslant\sqrt{n}-1$.
\label{t:prim}
\end{theorem}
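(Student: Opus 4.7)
The plan is to show that the socle $S$ of $G$ is contained in $D(G)$, and then to bound $|G:S|$ in terms of $n$ using the O'Nan--Scott theorem and known bounds from CFSG. Write $N=D(G)$; by Theorem~\ref{t:basic}(a), $N$ is transitive and normal. Since $G$ is primitive and not affine, the O'Nan--Scott theorem tells us that $S=T_1\times\cdots\times T_k$ with each $T_i$ isomorphic to a fixed non-abelian simple group $T$, that $G$ permutes $\{T_1,\dots,T_k\}$ transitively, and that $C_G(S)=1$.

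The first step is to show $S\leq N$. The subgroup $N\cap S$ is normal in $G$; if it were trivial then $[N,S]\leq N\cap S=1$, forcing $N\leq C_G(S)=1$ and contradicting the transitivity of $N$. So $N\cap S\neq 1$. Every normal subgroup of $S=T^k$ is a product $\prod_{i\in I}T_i$ for some $I\subseteq\{1,\dots,k\}$, and $G$-invariance together with the transitive action of $G$ on the factors forces $I=\emptyset$ or $I=\{1,\dots,k\}$. Hence $N\cap S=S$, i.e.\ $S\leq D(G)$.

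It therefore suffices to show $|G:S|\leq\sqrt n-1$, which I would do by running through the non-affine O'Nan--Scott types. In the almost simple case ($k=1$) we have $|G:S|\leq|\Out(T)|$, and CFSG gives $|\Out(T)|\leq\sqrt n-1$ for each family of simple groups compared against the minimum primitive degree: for $T=A_m$ one has $|\Out(T)|\leq 4$ and minimal degree $m$; for sporadic $T$, $|\Out(T)|\leq 2$ and the minimal degree is large; and for classical and exceptional $T$ the standard bounds on $|\Out(T)|$ (roughly logarithmic in the underlying field size) are dominated by the square root of any primitive degree. In the diagonal, product-action and twisted wreath cases, the degree grows like $|T|^{k-1}$ or $n_0^k$ where $n_0$ is a primitive almost simple degree, while $|G:S|$ is at most $|\Out(T)|\cdot k!$, and the inequality follows readily.

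The main obstacle is the finite collection of small almost simple primitive groups (for instance $S_5$ on $5$ points, $S_6$ on $6$ points, $\mathrm{P\Gamma L}$-type extensions of small $\psl$'s) where $\sqrt n-1$ is only barely larger than $|\Out(T)|$; for these one either checks the bound directly (noting that derangements of both parities, or of orders coprime to $|S|$, are already present in $G\setminus S$, so that $D(G)$ strictly exceeds $S$) or defers to a \textsc{Magma} computation of the kind cited in the introduction. Once these boundary cases are verified, the generic bound from the almost simple and product families goes through with substantial slack.
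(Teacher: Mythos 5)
Your first step (showing the socle $S$ lies in $D(G)$, which is the paper's Lemma~\ref{lem:socle}) is essentially right, modulo a small slip: when $S$ is not a minimal normal subgroup (the holomorph-of-simple-group types), $G$ does \emph{not} permute the simple factors transitively, so your ``$I=\emptyset$ or $I=\{1,\dots,k\}$'' argument does not apply; but there $S$ is a product of two regular minimal normal subgroups, each generated by derangements, so the conclusion survives. The genuine gap is in your second step: it does \emph{not} suffice to bound $|G:S|$. For $G=S_5\wr S_2$ in product action on $n=25$ points, $|G:S|=8>\sqrt{25}-1=4$; note also that your stated bound $|\Out(T)|\cdot k!$ is wrong for product action, where $G/S$ can be as large as $\Out(T)\wr S_k$, of order $|\Out(T)|^k\,k!$. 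The diagonal and twisted wreath cases fare no better: there $|G:S|$ can be as large as $|\Out(T)|\cdot k!$ while $n=|T|^{k-1}$ (resp.\ $|T|^k$), and $k!$ eventually dominates $|T|^{(k-1)/2}$ (already for $T=A_5$ and $k$ in the mid-teens), so the inequality $|G:S|\leqslant\sqrt n-1$ is simply false and certainly does not ``follow readily''.

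What is actually needed, and what the paper proves, is that $D(G)$ contains much more than the socle in every non-almost-simple type. For a regular non-abelian minimal normal subgroup (covering twisted wreath type) the paper's Lemma~\ref{lem:regnorm} shows $D(G)=G$ outright, using the fact that every automorphism of a finite simple group fixes a nontrivial element (Lemma~\ref{lem:fpf}) to manufacture a second fixed point for each element of a point stabiliser. For diagonal type the same fixed-point device shows that the full preimage in $G_\alpha$ of a point stabiliser in the action on the $k$ factors lies in $D(G)$, whence $|G:D(G)|$ is $1$ or a prime $p$ with $n=|T|^{p-1}$. For product action, Proposition~\ref{p:product} gives $|G:D(G)|\leqslant k\,|H:D(H)|$, because a derangement of the component group $H$ lifts to a derangement of the product; combined with $|H:D(H)|\mid m-1$ this yields the bound after a short case analysis. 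Only in the almost simple case does the crude comparison of $|\Out(T)|$ with $\sqrt{n_0}-1$ (plus a computer check of the finitely many exceptions) suffice, and there your argument does coincide with the paper's. As written, your proof establishes the theorem only in that case.
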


We conjecture that the same  bound is true for all primitive groups which are
not Frobenius groups:

\begin{conj}
If $G$ is a primitive permutation group of degree $n$ which is not a Frobenius
group, then $|G:D(G)|\leqslant\sqrt{n}-1$; moreover, this bound is attained
only if $G$ is an affine group.
\end{conj}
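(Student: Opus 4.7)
The first step is to combine the already-proved theorems with a reduction: by Theorem~\ref{t:prim}, part~1 of the conjecture is open only for primitive affine groups, so write $G = V\rtimes H$ with $V = \gf(p)^d$ elementary abelian of order $n=p^d$ and $H \leqslant \gl(V)$ irreducible. An affine map $x\mapsto xh+v$ is a derangement iff $v\notin\mathrm{Im}(h-I)$; if $h$ has no eigenvalue $1$, then $h-I$ is invertible and no such $v$ exists, while if $h$ does have eigenvalue $1$ then $\mathrm{Im}(h-I)$ is proper and there are (many) such $v$. Since every nonzero translation is a derangement, it follows that $D(G) = V\cdot R(H)$, where $R(H) = \langle h\in H : \ker(h-I)\ne 0\rangle$, and therefore $|G:D(G)| = |H:R(H)|$. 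The Frobenius hypothesis translates to $R(H)\ne 1$, so the conjecture in the affine case becomes: \emph{for an irreducible $H\leqslant\gl_d(p)$ with $R(H)\ne 1$, one has $|H:R(H)|\leqslant p^{d/2}-1$.}

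Next I would extract everything possible from the general theory. Theorem~\ref{t:basic}(d) applied to $G$ says $H/R(H)$ permutes the nontrivial $R(H)$-orbits on $V\setminus\{0\}$ semiregularly, so $|H:R(H)|$ divides the number~$r$ of such orbits, reducing the bound to one on $r$. In the nontrivial subcase $R(H)\ne H$, Corollary~\ref{c:stab} further tells us that the elements of $H$ with \emph{no} eigenvalue~$1$ already generate $H$, so we are in a very tight regime: both sorts of elements (with and without eigenvalue~$1$) are forced to be plentiful, and we must show the first kind dominates multiplicatively.

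The main body of the argument would then proceed by Aschbacher's classification of $H$ as a subgroup of $\gl(V)$. In the classical cases ($\SL$, $\mathrm{Sp}$, $\mathrm{SU}$, $\Omega$), transvections and reflections, which have eigenvalue~$1$, generate the derived classical subgroup, making $|H:R(H)|$ at most the size of the group of scalars, which is at most $q-1\leqslant p^{d/2}-1$ when $d\geqslant 2$. In the geometric classes $\mathcal{C}_2$--$\mathcal{C}_7$ (imprimitive, field extension, tensor decomposition, etc.) the plan is to reduce the dimension or degree and induct, using Clifford theory on $V\!\restriction_{R(H)}$ and the known structure of the stabiliser of the decomposition. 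For the remaining almost quasisimple class $\mathcal{S}$, one must invoke CFSG: for each simple group $S$ and each faithful irreducible $\gf(p)S$-module $V$, one shows (typically via Scott's formula $\dim\ker(g-I) \geqslant \sum\dim V_i - \dim V$ and character tables) that $S$ is generated by its elements with eigenvalue~$1$, so that $R(H)\supseteq S$ and $|H:R(H)|\leqslant |\Out(S)|\cdot(\text{scalars})$, which is comfortably below $p^{d/2}-1$ except possibly in a finite list of small cases that can be checked directly.

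The harder half of the conjecture is the uniqueness clause: equality forces $G$ affine. For this I would revisit the proof of Theorem~\ref{t:prim} (not shown above) and track the inequality; equality should force the point stabiliser to be trivial on some invariant structure, contradicting the non-affine O'Nan--Scott type. The principal obstacle is, unsurprisingly, the class $\mathcal{S}$: both because the bookkeeping in small dimensions is delicate (one cannot afford slack of more than about $p^{d/2}$), and because one must simultaneously rule out equality in the non-affine primitive case using whatever sharper statement is available from Theorem~\ref{t:prim}'s proof. The geometric classes, by contrast, should be handled uniformly by induction once the affine base case is established for irreducible $H$ in low dimension.
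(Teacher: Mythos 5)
This statement is a \emph{conjecture} in the paper, not a theorem: the authors prove it for imprimitive groups (Theorem~\ref{t:imprim}) and for non-affine primitive groups (Theorem~\ref{t:prim}), reformulate the remaining affine case as Conjecture~\ref{c:linear} via Proposition~\ref{p:linear}, and explicitly state that the Aschbacher-style analysis of Case~3 ``has not been completed.'' So there is no proof in the paper to match yours against. Your preliminary reductions are correct and coincide exactly with what the paper does prove: the reduction to affine $G=V\rtimes H$, the identification $D(G)=V\cdot R(H)$ and $|G:D(G)|=|H:R(H)|$ (Proposition~\ref{p:linear}), the translation of the non-Frobenius hypothesis into $R(H)\ne 1$, and the semiregularity of $H/R(H)$ on $R(H)$-orbits (Proposition~\ref{p:RH}). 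Up to that point you have rediscovered Conjecture~\ref{c:linear}, which is where the paper stops.

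The body of your argument, however, is a programme rather than a proof, and its hardest step contains a claim that is false in general. For Aschbacher class $\mathcal{S}$ you assert that for each quasisimple $S$ and each faithful irreducible module one shows ``that $S$ is generated by its elements with eigenvalue~$1$, so that $R(H)\supseteq S$.'' The paper's own Example~\ref{eg:small} refutes this: $\SL(2,3)\leqslant\gl(2,23)$ and $\SL(2,5)\leqslant\gl(2,59)$ act semiregularly on nonzero vectors, so their only elements with eigenvalue~$1$ are trivial, yet they occur as normal quasisimple factors of irreducible groups $H=X\circ Y$ with $R(H)\ne 1$ and $H/R(H)\cong A_4$ or $A_5$. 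Any correct treatment of classes $\mathcal{C}_4$, $\mathcal{C}_7$ and $\mathcal{S}$ must accommodate tensor factors that are themselves fixed-point-free, and this is precisely the configuration the paper identifies as the open difficulty (Case~3, $R(H)$ irreducible, together with the tensor cases of Case~2B). Your classical-group step also needs care: the normaliser in $\gl(d,q)$ of a classical group can exceed the quasisimple part by more than scalars. Finally, for the uniqueness clause the paper claims the opposite of your assessment --- that it \emph{follows} from the first part together with the quantitatively stronger bounds obtained in the diagonal, product-action and almost simple cases --- rather than requiring a separate delicate analysis. In short: the parts of your proposal that are proofs are the parts the paper already proves, and the parts that would settle the conjecture are not proofs.
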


For the first part of this conjecture, it suffices to consider affine groups,
and we explain in Section~\ref{s:affine} the partial results we have obtained
on this. The second part follows from the first together with our results on
non-affine primitive groups, where we obtain substantially better bounds in
all cases. For example, groups of twisted wreath product type
satisfy $D(G)=G$, and almost simple groups with $D(G)\ne G$ can be completely
classified. See Section~\ref{s:prim} below.

Another question we pose is the following:

\begin{qn}
Which groups can arise as $G/D(G)$ for some transitive permutation group $G$?
\end{qn}

We have no example of a group $H$ which cannot be isomorphic to $G/D(G)$ for
any transitive finite permutation group $G$, but the evidence is far too
thin to support the conjecture that all groups arise.

If $G$ is a Frobenius group, then $D(G)$ is the Frobenius kernel, and so
$G/D(G)$ is isomorphic to the Frobenius complement. The structure of Frobenius
complements was determined by Zassenhaus; either such
a group is metacyclic, or it has a normal subgroup of index at most two which
is isomorphic to the direct product of $\SL(2,3)$ or $\SL(2,5)$ and a metacyclic
group. See Passman~\cite{passman} for an account of this.

There are transitive groups with $G/D(G)$ not isomorphic to a Frobenius
complement, though they are rather rare. The smallest degree of a primitive
group with this property is $625$; there are primitive groups of this degree
for which $G/D(G)$ is isomorphic to the Klein group $V_4$ or the symmetric
group $S_3$. In the final section of the paper, we construct a number of
further examples of this phenomenon.

\section{Proofs of the basic results}

We begin with the proof of Theorem~\ref{t:basic}. As noted, parts (a) and (b)
are due to H.~Zantema~\cite{zantema}, and are repeated here since we will
push the arguments a little further to prove the rest of the theorem.

\begin{proof} Let $\pi$ be the permutation character. Since $G$ is
transitive, the Orbit-Counting Lemma gives
\[\sum_{g\in G}(\pi(g)-1) = 0.\]
Now similarly
\[\sum_{g\in N}(\pi(g)-1) = (k-1)|N|,\]
where $k$ is the number of $N$-orbits. So
\[\sum_{g\in G\setminus N}(\pi(g)-1) = -(k-1)|N|.\]
But every term in the sum on the left is non-negative, since all the
elements with $\pi(g)-1<0$ lie in $N$. We conclude that both sides are zero.
The right-hand side shows that $k=1$,
and the left-hand side contains no terms with $\pi(g)>1$, so all such elements
lie in $N$. This proves (a) and (b).

For (c), note that
\begin{eqnarray*}
|G|(r_G-1) &=& \sum_{g\in G}(\pi(g)^2-1),\\
|N|(r_N-1) &=& \sum_{g\in N}(\pi(g)^2-1).
\end{eqnarray*}
Since every element of $G\setminus N$ has $\pi(g)=1$, the two displayed
expressions are equal, which proves (c).

Finally, (d) follows from (c) since the $r_N-1$ orbits of $N$ on ordered pairs
of distinct elements fall into $r_G-1$ orbits under the action of $G/N$.
\end{proof}

We mention another derivation of (b) from (a), since we will need this later.
This depends on the following (well-known) generalisation of the Orbit-Counting
Lemma. For completeness, we give the proof.

\begin{lemma}
Let $G$ be finite transitive permutation group on $\Omega$, and $t$ an arbitrary
permutation on $\Omega$. Then the average number of fixed points of elements
in the coset $tG$ is $1$.
\label{l:coset}
\end{lemma}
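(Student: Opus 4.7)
The plan is to mimic the usual proof of the Orbit-Counting Lemma by double counting the set of pairs
\[
S = \{(\alpha, g) \in \Omega \times G : (tg)(\alpha) = \alpha\},
\]
and deduce that $|S| = |G|$, which gives the claim after dividing by $|G|$.

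First I would count $|S|$ by summing over $g \in G$: this gives
\[
|S| = \sum_{g \in G} |\mathrm{Fix}(tg)|,
\]
so the average number of fixed points in the coset $tG$ is exactly $|S|/|G|$. The second step is to count $|S|$ by summing over $\alpha \in \Omega$: the condition $(tg)(\alpha) = \alpha$ is equivalent to $g(\alpha) = t^{-1}(\alpha)$, so
\[
|S| = \sum_{\alpha \in \Omega} |\{g \in G : g(\alpha) = t^{-1}(\alpha)\}|.
\]
Since $G$ is transitive on $\Omega$, for every $\alpha \in \Omega$ the point $t^{-1}(\alpha)$ lies in $\Omega$ and the set $\{g \in G : g(\alpha) = t^{-1}(\alpha)\}$ is either empty or a coset of the stabiliser $G_\alpha$; transitivity says it is nonempty, so its cardinality is $|G_\alpha| = |G|/|\Omega|$. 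Summing over the $|\Omega|$ choices of $\alpha$ gives $|S| = |G|$, as required.

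The argument is essentially a one-line double count, with no real obstacle: the only place transitivity of $G$ is used is to guarantee that the fibre over each $\alpha$ has the same size $|G|/|\Omega|$, which is what makes the average come out to exactly $1$ rather than to the orbit count of $G$. Note that $t$ itself need not normalise $G$ or belong to any particular overgroup; it is an arbitrary permutation of $\Omega$, and the proof goes through unchanged.
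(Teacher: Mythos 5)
Your proof is correct and is essentially the same double-counting argument as the paper's, differing only in using left-action notation (counting $g$ with $g(\alpha)=t^{-1}(\alpha)$ rather than $g$ mapping $\alpha t$ to $\alpha$). Both count the pairs fibre-by-fibre over $\alpha$, use transitivity to get $|G|/|\Omega|$ per fibre, and conclude the total is $|G|$.
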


\begin{proof}
We follow the usual proof of the Orbit-Counting Lemma. If $G$ is transitive
on $\Omega$, with $|\Omega|=n$, count pairs $(\alpha, g)$ for which
$\alpha\in\Omega$, $g\in G$, and
$\alpha tg=\alpha$. For each of the $n$ choices of $\alpha$, there are
$|G|/n$ elements $g\in G$ mapping $\alpha t$ to $\alpha$; so there are $|G|$ such pairs. Counting the other way, we sum the numbers of fixed points of
elements in the coset $tG$.
\end{proof}

Now suppose that $g\in G\setminus D(G)$. By (a) and Lemma~\ref{l:coset},
the average number of fixed points of elements of $gD(G)$ is $1$, but none
of these elements is a derangement; so all have exactly one fixed point.

\paragraph{Proof of Corollary~\ref{c:stab}} Since $D(G)$ is transitive,
$|G_\alpha:G_\alpha\cap D(G)|=|G:D(G)|>1$. But all the elements of $G_\alpha$
not in $D(G)$ are derangements (they fix only $\alpha$); so there are at least
$|G_\alpha|/2$ derangements in $G_\alpha$, and they generate $G_\alpha$ (since
any group is generated by the complement of any proper subgroup).\qed

\paragraph{Proof of Theorem~\ref{t:imprim}}
Let $N=D(G)$, and $H=G/N$. By Corollary \ref{cor1} we have that $|H|$ divides $n-1$. Moreover, as $N$ is transitive we have $|H|=|G:N|=|G_\alpha:N_\alpha|$. Furthermore, by Theorem \ref{thm1}(d), $G_\alpha/N_\alpha$ permutes the $N_\alpha$-orbits  different from $\{\alpha\}$  semiregularly.

Suppose that $G$ is imprimitive, with $\ell$ blocks of size $k$, where $k\ell=n$.
Then $G_\alpha$ permutes among themselves the $N_\alpha$-orbits in the block
containing $\alpha$; so $|H|$ divides $k-1$. Then also $|H|$ divides
$n-k=k(\ell-1)$, and since $|H|$ is coprime to $k$, we see that $|H|$ divides
$\ell-1$. But $\min\{k,\ell\}\leqslant\sqrt{n}$, and so the result follows.

Equality can be attained if $n$ is a prime power (and a square). Let $V$ be a
$2$-dimensional vector space over the finite field $F$. Then the semi-direct
product of the additive group of $V$ and the multiplicative group of $F$ is
a Frobenius group of order $|F|^2(|F|-1)$. \qed

\section{Affine groups}
\label{s:affine}

In this section we consider affine groups.

\subsection{Preliminaries and a conjecture}

Let $V$ be a $d$-dimensional vector space over the field of order $q$. Let
$T$ be the translation group of $V$, and $H$ a linear group on $V$ (a
subgroup of $\gl(d,q)$). Then the semidirect product $G=T\rtimes H$ is
a transitive permutation group on $V$; it is primitive if and only if the
linear group $H$ is irreducible.

Given a linear group $H$, we let $R(H)$ be the subgroup of $H$ generated by
elements which have an eigenvalue~$1$ in their action on $V$.

\begin{prop}
With the above notation, $D(G)$ is the semidirect product $T\rtimes R(H)$, and
so $|G:D(G)|=|H:R(H)|$ and $G/D(G)\cong H/R(H)$.
\label{p:linear}
\end{prop}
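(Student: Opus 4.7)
The plan is to identify the derangements in $G = T \rtimes H$ explicitly, then verify both inclusions.

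\medskip

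\noindent\textbf{Step 1 (characterising derangements).} Write a general element of $G$ as a pair $(v,h)$ with $v\in V$ and $h\in H$, acting on $V$ by $x\mapsto x^h+v$. The element $(v,h)$ fixes $x$ iff $x(h-1)=-v$, so $(v,h)$ has a fixed point iff $v$ lies in the image of the linear map $h-1$ on $V$, and it is a derangement otherwise. In particular, if $h-1$ is invertible (equivalently, $h$ does not have $1$ as an eigenvalue), then $(v,h)$ is never a derangement. Therefore every derangement has the form $(v,h)$ with $h$ having eigenvalue $1$ and $v\notin\mathrm{im}(h-1)$.

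\medskip

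\noindent\textbf{Step 2 (the easy inclusion).} By the characterisation above, every derangement lies in the subgroup $T\rtimes R(H)$, because its $H$-component has eigenvalue $1$ and so belongs to $R(H)$. Hence $D(G)\leqslant T\rtimes R(H)$.

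\medskip

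\noindent\textbf{Step 3 (the reverse inclusion).} I need to show that both $T$ and $R(H)$ lie in $D(G)$. The translation group is immediate: every non-identity translation $(v,1)$ moves every point, hence is a derangement, so $T\leqslant D(G)$. For $R(H)$, it suffices to show that every generator $(0,h)$ with $h\in H$ an eigenvalue-$1$ element lies in $D(G)$. If $h=1$ this is trivial. Otherwise $\mathrm{im}(h-1)$ is a proper subspace of $V$, so I may pick $v\in V\setminus\mathrm{im}(h-1)$; in particular $v\ne 0$. Then $(v,h)$ and $(-v,1)$ are both derangements, and a direct computation in the semidirect product gives
\[
(v,h)\cdot(-v,1)=(0,h),
\]
so $(0,h)\in D(G)$. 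Hence $R(H)\leqslant D(G)$, and combining with $T\leqslant D(G)$ yields $T\rtimes R(H)\leqslant D(G)$.

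\medskip

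\noindent\textbf{Step 4 (conclusion).} The two inclusions give $D(G)=T\rtimes R(H)$. Then $|G:D(G)|=|T||H|/(|T||R(H)|)=|H:R(H)|$, and $G/D(G)=(T\rtimes H)/(T\rtimes R(H))\cong H/R(H)$. None of the steps is difficult; the only small observation needed is the factorisation $(0,h)=(v,h)(-v,1)$ expressing an eigenvalue-$1$ linear element as a product of two affine derangements, which is the only non-bookkeeping idea in the argument.
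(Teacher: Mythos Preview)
Your proof is correct and follows essentially the same route as the paper's: both show $T\leqslant D(G)$, both observe that a coset $hT$ contains a derangement precisely when $h$ has eigenvalue~$1$, and both then deduce $D(G)=T\rtimes R(H)$ from this. The only difference is that you obtain the dichotomy by direct linear algebra (solving $x(h-1)=-v$), whereas the paper invokes Lemma~\ref{l:coset} to see that the average number of fixed points on each coset $hT$ is~$1$; your version is therefore self-contained, while the paper's reuses an earlier tool, but the underlying argument is the same.
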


\begin{proof}
Clearly $T\leqslant D(G)$. By Lemma~\ref{l:coset}, the average number of
fixed points of elements in a coset $hT$ (for $h\in H$) is $1$; so there are
two possibilities:
\begin{itemize}
\item some element of $hT$ is a derangement, in which case $hT\subseteq D(G)$
and $h\in D(G)$;
\item every element of $hT$ has exactly one fixed point; then $h$ fixes the
zero vector and no other, so no eigenvalue of $h$ is equal to $1$.
\end{itemize}
So $hT\subseteq D(G)$ if and only if $h\in R(H)$, and the result follows.
\end{proof}

Thus using Theorems \ref{thm1} and  \ref{imprim} we can formulate a result and a conjecture which if true would settle our
main conjecture for primitive groups.

\begin{prop}\label{p:RH}
If $H$ is any subgroup of $\gl(d,q)$, then $|H:R(H)|\leqslant q^d-1$, and
$H/R(H)$ permutes the $R(H)$-orbits semiregularly. If $H$ is reducible, then
$|H:R(H)|\leqslant q^{d/2}-1$.
\end{prop}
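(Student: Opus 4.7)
Both claims reduce to results already established in the paper. Set $G=T\rtimes H$ acting on $V$, so $G$ is transitive of degree $n=q^d$; take $\alpha=0\in V$, so that $G_\alpha=H$. By Proposition~\ref{p:linear} we have $D(G)=T\rtimes R(H)$, which gives $D(G)_\alpha=R(H)$ and $|G:D(G)|=|H:R(H)|$.

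For the first assertion, Corollary~\ref{cor1} says that $|G:D(G)|$ divides $n-1$; since $n=q^d$ this immediately yields $|H:R(H)|\leqslant q^d-1$. The semiregularity claim is a direct translation of Theorem~\ref{thm1}(d): the $D(G)_\alpha$-orbits different from $\{\alpha\}$, i.e.\ the $R(H)$-orbits on $V\setminus\{0\}$, are permuted semiregularly by $G_\alpha/D(G)_\alpha=H/R(H)$.

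For the second assertion, suppose $H$ is reducible and let $W$ be a proper nontrivial $H$-invariant subspace of $V$. The cosets of $W$ in $V$ form a block system for $G$: an affine map $x\mapsto hx+v$ with $h\in H$, $v\in V$ sends $W+u$ to $hW+hu+v=W+(hu+v)$, using that $W$ is $H$-invariant. Hence $G$ is imprimitive, so Theorem~\ref{t:imprim} gives $|G:D(G)|\leqslant\sqrt{n}-1=q^{d/2}-1$. No serious obstacle is expected; all the substantive work has already been done in Proposition~\ref{p:linear} and Theorem~\ref{t:imprim}, and the only fresh observation needed is that $H$-invariance of $W$ lifts to $G$-invariance of the partition of $V$ into $W$-cosets.
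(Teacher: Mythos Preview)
Your proposal is correct and follows exactly the route the paper intends: the paper does not write out a separate proof of Proposition~\ref{p:RH} but simply remarks that it follows from Theorems~\ref{thm1} and~\ref{imprim} (via Proposition~\ref{p:linear}), and your argument fills in precisely those details. The only extra ingredient you supply---that reducibility of $H$ makes the $W$-cosets a nontrivial block system for $G=T\rtimes H$---is the obvious bridge the paper leaves implicit.
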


\begin{conj}
If $H$ is an irreducible subgroup of $\gl(d,q)$, then either $H$
acts semiregularly on the non-zero vectors of $V$, or
$|H:R(H)|\leqslant q^{d/2}-1$.
\label{c:linear}
\end{conj}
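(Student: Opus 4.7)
The plan is induction on $d$. Set $R := R(H)$, $m := [H:R]$, and assume $R \neq 1$ (so $H$ is not semiregular; otherwise there is nothing to prove). Since $R \trianglelefteq H$ and $H$ is irreducible, the fixed subspace $V^R$ is $H$-invariant, hence zero; so every $R$-orbit on $V\setminus\{0\}$ has size at least $2$. By Theorem~\ref{t:basic}(d), $Q := H/R$ permutes the $R$-orbits on $V \setminus \{0\}$ semiregularly: writing $s_i$ for the common $R$-orbit size in the $i$-th $H$-orbit, we have
\[
    q^d - 1 \;=\; m(s_1 + s_2 + \cdots + s_r), \qquad s_i \geqslant 2,
\]
where $r$ is the number of $H$-orbits on $V\setminus\{0\}$. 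In particular $2mr \leqslant q^d-1$, so if $r \geqslant \tfrac12(q^{d/2}+1)$ then $m \leqslant (q^d-1)/(2r) \leqslant q^{d/2}-1$ using $q^d-1 = (q^{d/2}-1)(q^{d/2}+1)$, and we are done.

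Otherwise $r$ is small, and we apply Clifford's theorem to the normal subgroup $R$: decompose $V = U_1 \oplus \cdots \oplus U_k$ into its $R$-homogeneous components, transitively permuted by $H$. The setwise stabiliser $K$ of $U_1$ in $H$ contains $R$, has index $k$ in $H$ (so $k\mid m$), and is reducible on $V$. In the sub-case $k\geqslant 2$, Proposition~\ref{p:RH} applied to $K$ gives $[K:R(K)]\leqslant q^{d/2}-1$; since $R(K)\subseteq R(H)=R$ one obtains $[K:R]\leqslant q^{d/2}-1$ and hence $m \leqslant k(q^{d/2}-1)$. This is weaker than desired when $k>1$, and the plan is to sharpen it by applying the inductive hypothesis on $\dim U_1 = d/k < d$: after quotienting $K$ by the kernel of $K\to\gl(U_1)$ and checking that irreducibility and non-semiregularity are inherited by the quotient, the induction gives a bound on the index of the corresponding $R$-subgroup inside $K$, which should then combine with the factor of $k$ from the permutation action on $\{U_1,\ldots,U_k\}$ to recover $m\leqslant q^{d/2}-1$.

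The main obstacle is the fully homogeneous sub-case $k=1$, which arises most seriously when $H$ is transitive on $V\setminus\{0\}$ (so $r=1$ and the orbit-counting inequality degenerates to $m\leqslant\tfrac12(q^d-1)$, far too weak). Here Clifford theory is trivial, no inductive reduction is available, and the only remaining structural input is that every coset of $R$ in $H$ other than $R$ itself must consist entirely of elements with no eigenvalue~$1$. In this transitive sub-case I expect a classification-based argument via Hering's theorem on the irreducible transitive finite linear groups to be unavoidable: one verifies the bound family-by-family on the list of such groups (the extraspecial/Suzuki/Ree examples, the $\mathrm{SL}$, $\mathrm{Sp}$ and $\mathrm{G}_2$ families of natural module type, the semilinear subgroups of $\mathrm{\Gamma L}(1,q^d)$, and the sporadic exceptions), and checks in each case that either $H$ is semiregular (giving the Frobenius exceptional family) or $[H:R]\leqslant q^{d/2}-1$.
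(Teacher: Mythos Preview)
This statement is labelled a \emph{Conjecture} in the paper and is not proved there. Section~6 opens by saying explicitly that the authors ``have not been able to prove the bound $|G:D(G)|\leqslant\sqrt{n}-1$ for affine primitive groups which are not Frobenius'', and then records only partial progress: a structural lemma showing that if $R(H)$ is nontrivial and reducible then $H$ preserves a direct-sum or tensor-product decomposition of $V$, and a verification of the conjecture in the subfield and imprimitive Aschbacher classes. The case where $R(H)$ is irreducible (their Case~3) is left open. So there is no proof in the paper to compare your attempt against.

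Your proposal is likewise a plan rather than a proof, and it stalls at essentially the same point. The Clifford reduction for $k\geqslant2$ is promising and can largely be made rigorous (in particular the kernel $N_1$ of $K\to\gl(U_1)$ lies in $R$, since its elements fix $U_1$ pointwise, giving $[\bar K:\bar R]=[K:R]$; and $k(q^{d/(2k)}-1)\leqslant q^{d/2}-1$ is elementary). However, you still need to rule out or separately handle the possibility that the image $\bar K$ is semiregular on $U_1\setminus\{0\}$, since then the inductive hypothesis gives no index bound; this genuinely occurs, for instance whenever $\dim U_1=1$. More seriously, the homogeneous case $k=1$ is the real obstruction. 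Your orbit-counting bound covers only $r\geqslant\tfrac12(q^{d/2}+1)$, and for small $r$ you address only the transitive sub-case $r=1$, and there only by proposing (not executing) a case-by-case check via Hering's classification. The intermediate range $1<r<\tfrac12(q^{d/2}+1)$ with $k=1$ is not touched at all. Since $k=1$ contains the paper's open Case~3 ($R$ irreducible on $V$), completing your outline would in particular settle what the authors could not.
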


\subsection{An example}

In this subsection, we give an example to show that the bound 
$|G:D(G)|\leqslant \sqrt{n}-1$, if true, is best possible for primitive groups
which are not Frobenius groups, by giving an example meeting the bound.

Let $q$ be a prime power, and $G$ the group
\[\{x\mapsto ax^i+c\mid a,c\in F, a\ne0, i\in\{1,q\}\}\]
of permutations of the field $F$ of order $q^2$. 

Let $A = \{a \in F \mid a^{q+1} = 1\}$, and let $H$ be the subgroup of $G$
consisting of the transformations of the above form with $a \in A$. 
Notice that $A$ is the set of $(q-1)^{\mathrm st}$ powers of non-zero elements of $F$.

Clearly, the group $T=\{x\mapsto x+c:c\in F\}$ of translations is contained in
$D(G)$. Now consider the map $x\mapsto ax^q$.  The point $x$ is fixed if and only if
$x=0$ or $x^{-(q-1)}=a$. If $a^{q+1}=1$, then the equation $x^{-(q-1)}=1$ has
$q-1$ solutions, and so by Theorem \ref{thm1}(b) the map $x\mapsto ax^q$ belongs to $D(G)$. Composing this with
the element $x\mapsto x^q$ (which is in $D(G)$) we see that the map $x\mapsto ax$ also lies in $D(G)$. Thus $H\leqslant D(G)$.

We now consider the transformations not in $H$. Now separately consider transformations of the form $x \mapsto a x + b$ and $x \mapsto a x^q + b$, where in both cases $a \notin A$. In the former case, it is easy to see that $x \mapsto a x + b$ has a unique
fixed point, namely $x = b/(1-a)$, for all $a\ne 1$, and in particular for all
$a \notin A$.
In the latter case, as there are no non-zero solutions
to the equation $x = a x^q$, the transformation $x \mapsto x - a x^q$ has trivial kernel and therefore is surjective. 
In particular, there is a unique value of $x$ such that $x - ax^q = b$ and thus a unique fixed point for the transformation $x \mapsto a x^q + b$. 
Hence every transformation outside $H$ has a unique fixed point, and so $H$ contains all derangements. Thus $H$ contains all the derangements and hence $D(G)\leqslant H$. As we have already seen that $H\leqslant D(G)$, equality holds. It is then clear that $G/D(G)$ has order $q-1$.

\section{Examples}

In this section, we describe a few examples of non-affine groups $G$ with
$D(G)\ne G$. Further affine examples appear in the final section.

There is no useful product construction. For suppose that $G_1$ and $G_2$ are
transitive on $\Omega_1$ and $\Omega_2$, and consider $G_1\times G_2$ acting
on $\Omega_1\times \Omega_2$. Then an element $(g_1,g_2) \in G_1 \times G_2$ is
a derangement if and only if either $g_1$ or $g_2$ is a derangement. So
$D(G_1\times G_2)$ contains both $D(G_1)\times G_2$ and $G_1\times D(G_2)$,
and hence it is equal to $G_1\times G_2$.

\subsection{General remarks}

Before giving some more examples we note a couple of  useful lemmas.

\begin{lemma}\label{lem:socle}
Let $G$ be a primitive permutation group with socle $N$. Then $N\leqslant D(G)$.
\end{lemma}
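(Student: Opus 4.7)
The plan is to show that every minimal normal subgroup $M$ of $G$ lies in $D(G)$; since the socle $N$ is the product of minimal normal subgroups, this yields $N\leqslant D(G)$.

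To handle a single minimal normal subgroup $M$, I will use the fact that $D(G)$ is a normal subgroup of $G$, so $M\cap D(G)$ is also normal in $G$. By minimality of $M$, there are only two possibilities: $M\cap D(G)=M$, which is what we want, or $M\cap D(G)=1$. So I aim to rule out the second case.

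Assume for contradiction that $M\cap D(G)=1$. Then $[M,D(G)]\leqslant M\cap D(G)=1$, so $M$ and $D(G)$ centralise each other as subgroups of $\mathrm{Sym}(\Omega)$. Now $M$ is transitive on $\Omega$ because it is a non-trivial normal subgroup of a primitive group, and $D(G)$ is transitive by Theorem~\ref{t:basic}(a). I then invoke the standard fact that two commuting permutation groups, each of which is transitive, are both semiregular: if $d\in D(G)$ fixes a point $\alpha$, then for any $\beta=m\alpha$ with $m\in M$ we have $d\beta=dm\alpha=md\alpha=m\alpha=\beta$, forcing $d=1$, and symmetrically for $M$. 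Hence both $M$ and $D(G)$ are regular on $\Omega$.

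But once $M$ is regular, every non-identity element of $M$ is a derangement of $G$, so $M\leqslant D(G)$; combined with $M\cap D(G)=1$ this forces $M=1$, contradicting that $M$ is a minimal (hence non-trivial) normal subgroup. This contradiction shows $M\leqslant D(G)$, and taking the product over all minimal normal subgroups of $G$ gives $N\leqslant D(G)$. The only step that requires any care is the commuting-transitive-groups observation, which is a short direct argument rather than a real obstacle; the rest is immediate from primitivity and Theorem~\ref{t:basic}(a).
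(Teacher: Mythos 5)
Your proof is correct, but it takes a different route from the paper's. The paper splits into two cases according to whether $G$ has a unique minimal normal subgroup: if so, the nontrivial normal subgroup $D(G)$ must contain it; if not, it invokes the ``folklore'' structure theorem that a primitive group with two minimal normal subgroups has socle $M_1\times M_2$ with both factors regular, whence both consist of derangements. You instead give a uniform argument for each minimal normal subgroup $M$: either $M\leqslant D(G)$ or $M\cap D(G)=1$, and in the latter case you derive the regularity of $M$ directly from the fact that the centraliser of a transitive group is semiregular (applied to the commuting transitive groups $M$ and $D(G)$), then conclude as the paper does that a regular normal subgroup consists of derangements. Your version is more self-contained --- it replaces the cited structure theorem for primitive groups with non-unique minimal normal subgroup by the elementary semiregularity lemma --- at the cost of a slightly longer argument; the paper's version is shorter given the quoted background. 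Both uses of primitivity are the same in substance (nontrivial normal subgroups are transitive), and both proofs hinge on the observation that every non-identity element of a regular normal subgroup is a derangement.
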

\begin{proof}
If $N$ is the unique minimal normal subgroup of $G$ then clearly
$N\leqslant D(G)$, as $D(G)\neq 1$ by Jordan's result. If $N$ is not the unique minimal normal of $G$ then by
a well-known ``folklore'' result (see~\cite[Theorem 4.4]{c:pg}),
$N=M_1\times M_2$, where $M_1$ and $M_2$ are regular. Hence we also have
$N\leqslant D(G)$ in this case as well.
\end{proof}

\begin{lemma}\label{lem:coprimecent}
Let $G=N\rtimes \langle \sigma\rangle$ be a permutation group such that $\sigma$ has order a power of the prime  $p$, with $p$ coprime to $|N|$. If $C_G(\sigma)\leqslant G_\alpha$ then $D(G)\leqslant N$.
\end{lemma}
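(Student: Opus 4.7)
The strategy is to prove that every element of $G\setminus N$ has a fixed point on $\Omega$, so no such element can be a derangement, which forces $D(G)\le N$.

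Since $|\langle\sigma\rangle|=p^a$ is coprime to $|N|$ with $N\trianglelefteq G$ and $G/N\cong\langle\sigma\rangle$ a $p$-group, $N$ is the unique normal Hall $p'$-subgroup of $G$; in particular every $p'$-element of $G$ lies in $N$. Given $g\in G\setminus N$, write $g=g_pg_{p'}$ for its decomposition into commuting $p$-part and $p'$-part. Then $g_{p'}\in N$ (being a $p'$-element) and $g_p\ne 1$ (otherwise $g\in N$). By Sylow's theorem, $g_p$ lies in some conjugate of $\langle\sigma\rangle$; replacing $g$ by the corresponding $G$-conjugate, which does not affect whether $g$ has a fixed point, we may assume $g_p=\sigma^j$ for some $j$ with $\sigma^j\ne 1$.

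Now $g_{p'}\in N$ commutes with $\sigma^j=g_p$, so $g_{p'}\in C_N(\sigma^j)$. The hypothesis $C_G(\sigma)\le G_\alpha$ gives $C_N(\sigma)\le N_\alpha$. When $\sigma$ has prime order, $\sigma^j\ne 1$ forces $\langle\sigma^j\rangle=\langle\sigma\rangle$, so $C_N(\sigma^j)=C_N(\sigma)\le N_\alpha$; hence $g_{p'}$ fixes $\alpha$. Combined with $\sigma^j\in\langle\sigma\rangle\le C_G(\sigma)\le G_\alpha$, which also fixes $\alpha$, this gives $g=\sigma^j g_{p'}\in G_\alpha$, so $g$ fixes $\alpha$, as desired.

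The delicate point is the passage from $g_{p'}\in C_N(\sigma^j)$ to $g_{p'}\in C_N(\sigma)$. For $\sigma$ of prime order this is automatic, but for higher prime-power orders $C_N(\sigma^j)$ can strictly contain $C_N(\sigma)$, so a separate argument is needed. I would close this gap by conjugating $g$ further by an element of $N$ that centralizes $\sigma^j$ (which keeps $g_p=\sigma^j$), exploiting the coprime action of $\langle\sigma\rangle$ on $N$ to move $g_{p'}$ within its $C_N(\sigma^j)$-orbit until it lands in $C_N(\sigma)$, thus reducing the general case to the prime-order one. This last step is the main obstacle in the proof.
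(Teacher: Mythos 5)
Your argument through the prime-order case is essentially the paper's own proof: split $g\in G\setminus N$ into commuting $p$- and $p'$-parts, note $g_{p'}\in N$ and $g_p\ne 1$, conjugate $g_p$ into $\langle\sigma\rangle$ by Sylow, and conclude that $g$ is conjugate into $C_G(\sigma^j)$ for some $\sigma^j\ne 1$. That much is correct, and you have put your finger on exactly the right difficulty: when $|\sigma|=p^a$ with $a\geqslant 2$ this only places $g_{p'}$ in $C_N(\sigma^j)$, which can properly contain $C_N(\sigma)$. (The paper's proof elides this same point: it asserts that $g$ is conjugate to $x\sigma^i$ with $x\in C_G(\sigma)$, whereas the argument given only yields $x\in C_G(\sigma^i)$.)

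However, the repair you propose cannot succeed. Conjugating $x\sigma^j$ by $n\in C_N(\sigma^j)$ replaces $x$ by $n^{-1}xn$, so it moves $x$ only within its conjugacy class inside $C_N(\sigma^j)$, and that class need not meet $C_N(\sigma)$ at all. In fact the statement is false as written once $a\geqslant 2$: let $N=\langle c\rangle\times\langle x\rangle\cong C_5\times C_3$, let $\sigma$ of order $4$ act by $c\mapsto c^2$, $x\mapsto x^{-1}$, and let $G=N\rtimes\langle\sigma\rangle$ act on the $15$ cosets of $\langle\sigma\rangle$. Then $C_N(\sigma)=1$, so $C_G(\sigma)=\langle\sigma\rangle=G_\alpha$ and every hypothesis holds; but $x\in C_N(\sigma^2)$, so $x\sigma^2$ has order $6$ and no conjugate of it can lie in $G_\alpha\cong C_4$. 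Hence $x\sigma^2$ is a derangement outside $N$. What your argument (and the paper's) actually proves is the version with the stronger hypothesis that $C_G(\sigma^j)\leqslant G_\alpha$ for every $j$ with $\sigma^j\ne 1$, equivalently $C_G(\sigma^{p^{a-1}})\leqslant G_\alpha$; this stronger hypothesis does hold in the paper's applications where $\sigma$ has non-prime order (e.g.\ in the product-action example one has $C_G(g^p)=C_N(\sigma)^p\rtimes\langle g\rangle\leqslant G_\alpha$). So your diagnosis of where the proof is delicate is exactly right, but the ``main obstacle'' you describe is not surmountable as stated: the hypothesis of the lemma, not just the proof, needs strengthening.
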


\begin{proof}
Let $g\in G\backslash N$. If $g$ has order a power of $p$ then Sylow's Theorem implies that $g$ is conjugate to an element of $\langle \sigma\rangle$ and hence fixes a point of $\Omega$. Suppose that $g$ does not have order a power of $p$. Then $|g|=mp^i$ for some $i>0$ and with $\gcd(m,p)=1$. Thus there exist $a,b\in\mathbb{Z}$ such that $am+bp^i=1$ and so $g=(g^{p^i})^b(g^m)^a$.  Now we have written $g$ as the product of two commuting elements, one of which (namely $(g^m)^a$) has order a nontrivial power of $p$. Thus $g$ is conjugate to an element of the form $x\sigma^i$ for some $x\in C_G(\sigma)$. Hence $g$ is conjugate to an element of $C_G(\sigma)$ and so fixes a point. Thus all derangements in $G$ lie in $N$.
\end{proof}

\subsection{The examples}\label{sec:eg}

\paragraph{Almost simple groups}
\begin{enumerate}
\item
Let $G=\pgaml(2,2^p)=N\rtimes\langle\sigma\rangle$, where $N=\pgl(2,2^p)$ for
$p$ an odd prime, and $\sigma$ a field automorphism of order $p$, acting on
the set $\Delta$ of right cosets of a subgroup
$H=C_{2^p+1}\rtimes C_{2p}\geqslant C_G(\sigma)$
of index $2^{p-1}(2^p-1)$.  When $p=3$, a \textsc{Magma} calculation shows that
$D(G)=\pgl(2,2^p)$. For $p\geqslant 5$ we have that $p$ is coprime to
$|\pgl(2,2^p)|$ and so Lemma \ref{lem:coprimecent} implies that
$D(G)=\pgl(2,2^p)$.  Thus for all primes $p$ we have $|G:D(G)|=p$.

\item Let $G=\psl(d,p^f)\rtimes \langle \varphi\rangle$ where $f$ is a power of a prime $r$ which does not divide $|\psl(d,p^f)|$, and $\varphi$ is a field automorphism of $\psl(d,p^f)$ of order $f$. Let $H=\psl(d,p)\times  \langle \varphi\rangle$  and let $G$ act on the set of right cosets of $H$. Then by Lemma \ref{lem:coprimecent} we have that $D(G)=\psl(d,p^f)$. (The fact that all derangements in $G$ lie in $\psl(d,p^f)$ was previously observed in~\cite{gms}.)

\end{enumerate}

\paragraph{Product action}

Let $N$ be $\pgl(2,2^p)$ in the action on $\Delta$ defined in part (a) above,
with $p\geqslant 5$. Let $G=N^p\rtimes\langle g\rangle$ act on
$\Omega=\Delta^p$, where $g=(\sigma,1,\ldots,1)(1,2,\ldots, p)$. Then $g$ has
order $p^2$ and we can choose $\alpha\in\Omega$ such that
$G_\alpha=H^p\rtimes\langle g\rangle$. Moreover, 
$C_G(g)=\{(h,\ldots,h)\mid h\in C_N(\sigma)\}\rtimes\langle g\rangle\leqslant G_\alpha$. Thus 
Lemma \ref{lem:coprimecent} implies that $D(G)=N^p$ and so $|G:D(G)|=p^2$. 

\paragraph{Diagonal action}

Let $T$ be a non-abelian simple group, and $p$ be a prime coprime to $|T|$. Let
$G=T^p\rtimes \langle \sigma\rangle$  where $\sigma$ has order $p$ and
permutes the $p$ simple direct factors of $T^p$, acting on the cosets of
$G_\alpha=\{(t,\ldots,t)\mid t\in T\}\times \langle\sigma\rangle=C_G(\sigma)$.
Then $G$ is a primitive group of diagonal type on a set of size $|T|^{p-1}$.
Any element of $T^p$ that is trivial in all but exactly one of the coordinates
is a derangement and so $T^p\leqslant D(G)$ and then Lemma
\ref{lem:coprimecent} implies that $D(G)=T^p$.

%
%
%
%
%
%

\section{Primitive groups}
\label{s:prim}

We now consider the various types of primitive groups, and prove
Theorem~\ref{t:prim} in all cases. By the O'Nan-Scott Theorem, a primitive group that does not preserve a product structure on $\Omega$ is either almost simple, affine or or diagonal type. See for example \cite{c:pg}.

\subsection{Diagonal type}

We note the following famous result, see \cite[Theorem 1.48]{Gor}.

\begin{lemma}\label{lem:fpf}
Let $T$ be a non-abelian finite simple group and let $\tau\in\Aut(T)$. Then there exists $t\in T\backslash\{1\}$ such that $t^\tau=t$.
\end{lemma}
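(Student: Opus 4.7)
The plan is to split into two cases based on whether $\tau$ is an inner or an outer automorphism of $T$.

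For inner automorphisms the proof is immediate: if $\tau$ is conjugation by $x \in T$, then $x^\tau = x$ since $x$ commutes with itself. We may take $x \neq 1$, since if $\tau$ is the identity any non-identity $t \in T$ works; so $x$ itself is the required fixed point.

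For outer automorphisms the result inevitably invokes the Classification of Finite Simple Groups, and I would proceed family by family. For $T = A_n$ with $n \geq 5$ and $n \neq 6$, the group $\Out(T)$ has order $2$, represented by conjugation by a transposition from $S_n$, which fixes an embedded $A_{n-2}$; the case $n = 6$ is handled directly from the known structure of $\Aut(A_6)$. For $T$ of Lie type over $\mathbb{F}_{p^e}$, the outer automorphism group is built from diagonal, field, and graph automorphisms: a field automorphism of order $r$ dividing $e$ fixes the subgroup of $T$ defined over $\mathbb{F}_{p^{e/r}}$, which is non-trivial, while composite graph-field-diagonal automorphisms can be reduced, after multiplication by a suitable inner automorphism, to a standard Steinberg endomorphism of the underlying algebraic group, whose fixed-point subgroup is non-trivial by Lang--Steinberg. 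For the sporadic $T$, $|\Out(T)| \leq 2$ and a fixed element is read directly from the ATLAS.

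A more uniform (but equally CFSG-dependent) route is to invoke the theorem that every finite group admitting a fixed-point-free automorphism is solvable; applied to $T$, this contradicts non-abelian simplicity. The principal obstacle is the Lie-type case, where compositions of field, graph, and diagonal automorphisms require careful bookkeeping; the Lang--Steinberg framework is the tool I would lean on most heavily to produce fixed subgroups uniformly across twisted and untwisted Lie-type families.
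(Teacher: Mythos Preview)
The paper does not prove this lemma: it quotes it as a ``famous result'' and refers to \cite[Theorem 1.48]{Gor}. Your second, uniform route---deducing the claim from the theorem that a finite group admitting a fixed-point-free automorphism is solvable---is precisely what that citation delivers, so in that sense your proposal coincides with the paper's treatment. Your first, family-by-family sketch is a legitimate and more explicit alternative, essentially unpacking how one proves the cited theorem from CFSG; the Lang--Steinberg framework you invoke is indeed the standard tool for the Lie-type case, though the step of replacing $\tau$ by a standard Steinberg endomorphism via multiplication by an inner automorphism needs a word of justification (one argues via conjugacy in the ambient algebraic group, not merely coset equality in $\Aut(T)$).
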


We also need the following lemma.

\begin{lemma}\label{lem:regnorm}
Let $G$ be a transitive permutation group on $\Omega$ with a regular non-abelian minimal normal subgroup. Then $G=D(G)$.
\end{lemma}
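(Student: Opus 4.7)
The plan is to show $G = D(G)$ by establishing the two containments $N \leqslant D(G)$ and $G_\alpha \leqslant D(G)$, after which transitivity ($G = NG_\alpha$) completes the argument. The first is immediate: since $N$ is regular, every nonidentity element of $N$ fixes no points and is therefore a derangement. For the second, by Theorem~\ref{thm1}(b) it suffices to prove that every nontrivial $h \in G_\alpha$ has at least two fixed points on $\Omega$.

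To count fixed points of such an $h$, I would identify $\Omega$ with $N$ via the regular action $\alpha n \leftrightarrow n$. A short computation using $h \in G_\alpha$ shows that $\alpha \cdot n \cdot h = \alpha \cdot n^{h}$, so the action of $h$ on $\Omega$ is transported to the conjugation action of $h$ on $N$. Hence the fixed-point set of $h$ corresponds to $C_N(h)$, and the theorem reduces to showing that $C_N(h) \neq 1$ for every nontrivial $h \in G_\alpha$.

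Since $N$ is nonabelian and minimal normal, write $N = T_1 \times \cdots \times T_k$ with each $T_i$ isomorphic to a nonabelian simple group $T$, and observe that $h$ permutes the factors. If $h$ stabilises some factor $T_i$, then it acts on $T_i$ as an automorphism, and Lemma~\ref{lem:fpf} immediately produces a nontrivial fixed element of $T_i \leqslant N$. Otherwise, pick a nontrivial cycle of $h$ on the factors, say $T_1 \to T_2 \to \cdots \to T_m \to T_1$ with $m \geqslant 2$; applying Lemma~\ref{lem:fpf} to the automorphism of $T_1$ induced by $h^m$ yields some $1 \neq t \in T_1$ with $t^{h^m} = t$, and then the ``diagonal'' element $(t,\, t^h,\, t^{h^2},\, \ldots,\, t^{h^{m-1}},\, 1,\, \ldots,\, 1)$ lies in $C_N(h) \setminus \{1\}$.

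I expect the main obstacle to be precisely this last case, where $h$ fixes no factor setwise; the diagonal construction, combined with Lemma~\ref{lem:fpf} applied to $h^m$, is the decisive trick. Once $C_N(h) \neq 1$ is established, $h$ fixes both $\alpha$ and $\alpha \cdot n$ for any nontrivial $n \in C_N(h)$, producing the two fixed points needed to invoke Theorem~\ref{thm1}(b) and conclude $h \in D(G)$.
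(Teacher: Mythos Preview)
Your proposal is correct and follows essentially the same approach as the paper: identify $\Omega$ with $N$, reduce to showing each $h\in G_\alpha$ centralises a nontrivial element of $N$, and for a cycle of $h$ on the simple factors apply Lemma~\ref{lem:fpf} to the automorphism of one factor induced by the appropriate power of $h$, then propagate along the cycle. The only cosmetic differences are that the paper treats all cycles uniformly (avoiding your case split, which is harmless but unnecessary since the $m=1$ case is subsumed by the general construction) and fills in every cycle rather than just one.
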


\begin{proof}
Let $N$ be a non-abelian regular minimal normal subgroup of $G$. Then 
$N\cong T^k$ for some non-abelian simple group $T$, and $N\leqslant D(G)$. Note 
that, for $\alpha\in\Omega$, we have $G=N\rtimes G_\alpha$. Moreover, we can 
identify $\Omega$ with $N$ such that, for $\alpha=1_N$, each nontrivial element
of $G_\alpha$ acts as a nontrivial automorphism of $N$. Let $g\in G_\alpha$ and
write $g=(\tau_1,\ldots,\tau_k)\sigma$ where each $\tau_i\in\Aut(T)$ and
$\sigma\in S_k$. Suppose that $(i_1,i_2,\ldots,i_r)$ is a cycle of $\sigma$.
By Lemma \ref{lem:fpf}, there exists $t\in T\backslash\{1\}$ such that
$\tau_{i_1}\tau_{i_2}\ldots\tau_{i_k}$ fixes $t$. Let $t_{i_1}=t$ and for each 
$j\in \{2,\ldots, r\}$ let $t_{i_j}=t^{\tau_{i_1}\ldots\tau_{i_{j-1}}}$. Doing 
this for each cycle of $\sigma$ we construct a nontrivial element 
$\beta=(t_1,\ldots,t_k)\in N$ such that $\beta^g=\beta$. Hence $g$ has at 
least two fixed points and so by Theorem \ref{thm1}(b) we have that 
$g\in D(G)$. Since $G=N\rtimes G_\alpha$ it follows that $G=D(G)$.
\end{proof}

We are now able to obtain a bound for $|G:D(G)|$ when $G$ is primitive of diagonal type.

\begin{lemma}
Let $G$ be primitive of diagonal type and $G\neq D(G)$. Then the socle of $G$
is $N=T^p$ for some non-abelian finite simple group $T$ and some odd prime $p$
not dividing $|T|$, and $G$ induces a cyclic group of prime order on the set of $p$ simple direct factors of $N$. Moreover, $|G:D(G)|=p$.
\end{lemma}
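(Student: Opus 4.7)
The plan is to reduce the analysis to the induced action of $G$ on the $k$ simple direct factors of the socle $N = T^k$, and then combine a fixed-point count with the classification of primitive groups of prime degree.

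First I would use Lemma \ref{lem:socle} to get $N \leqslant D(G)$. Since $G$ is of diagonal type with $k \geqslant 2$ factors, $G$ acts transitively on the set of simple direct factors, yielding a homomorphism $G \to S_k$ whose image $P$ is primitive (as $G$ is primitive); let $K$ denote its kernel. Elements of $K \cap G_\alpha$ must normalize the diagonal $D = N_\alpha$, and a short computation shows that they act on $N$ as $(\tau, \tau, \ldots, \tau)$ for some $\tau \in \Aut(T)$. By Lemma \ref{lem:fpf}, $\tau$ has a non-identity fixed point $t \in T$, and since $k \geqslant 2$ the coset $(t, 1, 1, \ldots, 1)D$ is fixed by such an element and differs from $D$. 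Hence by Theorem \ref{thm1}(b) these elements lie in $D(G)$; since $K = N (K \cap G_\alpha)$, we conclude $K \leqslant D(G)$, and $G/D(G)$ is a quotient of $G/K \cong P$.

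Next I would identify the cosets $\sigma K$ (for $\sigma \in P$) that can lie outside $D(G)$. Any lift $g = (\tau, \tau, \ldots, \tau)\sigma \in G_\alpha$ fixes the coset $(t_1, \ldots, t_k)D$ precisely when $\tau(t_{\sigma^{-1}(j)}) = t_j s$ for some $s \in T$ independent of $j$. Taking $s = 1$ and decomposing $\sigma$ into cycles of lengths $r_1, \ldots, r_m$ on the $k$ factors, the base coordinate of each cycle must lie in $\mathrm{Fix}(\tau^{r_j})$, and two such solutions give the same coset only if they differ by a diagonal element of $\mathrm{Fix}(\tau)$. This yields at least $\prod_j |\mathrm{Fix}(\tau^{r_j})|/|\mathrm{Fix}(\tau)|$ fixed cosets; using $\mathrm{Fix}(\tau) \leqslant \mathrm{Fix}(\tau^{r_j})$ and Lemma \ref{lem:fpf}, this is at least $|\mathrm{Fix}(\tau)|^{m-1} \geqslant 2^{m-1}$. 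Hence when $\sigma$ has $m \geqslant 2$ cycles, every lift of $\sigma$ fixes at least two cosets, so $\sigma K \subseteq D(G)$. For $\sigma$ a single $k$-cycle, a similar calculation (with $\tau = 1$) shows that a pure-permutation lift has $|\{s \in T : s^k = 1\}|$ fixed cosets; by Cauchy's theorem this exceeds $1$ whenever $\gcd(k, |T|) > 1$, placing $\sigma K$ into $D(G)$ in that case as well.

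Hence, assuming $G \neq D(G)$, every $\sigma \in P$ with $\sigma K \not\subseteq D(G)$ is a single $k$-cycle and $\gcd(k, |T|) = 1$. Since $D(G)/K$ is a normal subgroup of $P$ containing every non-$k$-cycle, the non-$k$-cycles of $P$ cannot generate $P$. A case analysis using the classification of primitive groups of degree $k$ containing a regular cyclic subgroup of order $k$ forces $k = p$ prime and $P = C_p$: for each other option ($C_p \rtimes C_d$ with $d > 1$, $A_p$, $S_p$, or the known sporadic primitive groups of prime degree, and likewise for the $2$-transitive primitive groups arising when $k$ is composite) the non-$k$-cycles generate the whole group. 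By Feit--Thompson $|T|$ is even, so $\gcd(p, |T|) = 1$ forces $p$ odd. Finally, $D(G)/K$ is a proper subgroup of $C_p$, hence trivial, so $|G : D(G)| = p$. The main obstacle is the single $k$-cycle sub-case of the fixed-point count: since the pure-permutation lift may not lie in $G$, one must supplement the above with Lemma \ref{l:coset} applied to $\sigma K$ (transitive as $N \leqslant K$), which reduces $\sigma K \subseteq D(G)$ to the existence in $\sigma K$ of an element whose fixed-point count differs from $1$.
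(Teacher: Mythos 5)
Your overall strategy is close to the paper's: reduce to the projection $\pi\colon G\to S_k$ onto the simple direct factors, use Lemma \ref{lem:fpf} to manufacture a second fixed coset for elements whose image has at least two cycles, and then handle the full-cycle cosets separately. Two points deserve comment, one of which is a genuine gap.

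The gap is the coprimality step. Your argument that $\gcd(k,|T|)>1$ forces the $k$-cycle cosets into $D(G)$ rests on counting fixed cosets of a \emph{pure-permutation} lift $(1,\dots,1)\sigma$, and, as you yourself note, such a lift need not lie in $G$: a typical element of $G_\alpha$ over a $k$-cycle is $\tau\sigma$ with $\tau\in\Aut(T)$ nontrivial. Your proposed repair via Lemma \ref{l:coset} does not close this: Theorem \ref{thm1}(b) already tells you that any element with $\ne 1$ fixed points lies in $D(G)$, so the reduction ``find an element of $\sigma K$ whose fixed-point count differs from $1$'' is where the work actually is, and you have not produced such an element. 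The paper does so as follows: replace $g=\tau(1,2,\dots,k)$ by a power $g^b$ with $\gcd(b,k)=1$ so that the $\Aut(T)$-part $\tau$ has order a power of $k$; take a Sylow $k$-subgroup $P$ of $T\langle\tau\rangle$ containing $\tau$ and pick $s$ of order $k$ in $Z(P)\cap T$ (nontrivial because $P\cap T\lhd P$ is nontrivial when $k\mid |T|$), so that $s^\tau=s$; then check directly that $g$ fixes the coset $N_\alpha(s,s^2,\dots,s^{k-1},1)$ as well as $N_\alpha$. Without some such construction your proof of $p\nmid |T|$ is incomplete.

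The second point is not a gap but a significant inefficiency. To force $\pi(G)=C_p$ you invoke the classification of primitive groups of degree $k$ containing a $k$-cycle and then assert, case by case, that the non-$k$-cycles generate. This classification is CFSG-dependent for composite $k$, and the generation claims are left unverified. It is unnecessary: the non-$k$-cycles of $\pi(G)$ include every point stabiliser, and the subgroup they generate is normal and contains a point stabiliser; if $\pi(G)$ is primitive and not regular, a point stabiliser is maximal and non-normal, so its normal closure is everything. Hence the non-$k$-cycles generate $\pi(G)$ unless $\pi(G)$ is regular primitive, i.e.\ $C_p$. This is essentially the paper's route (via the maximality of the preimage $X$ of a point stabiliser in $G_\alpha$). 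You should also dispose explicitly of the diagonal-type case where $\pi(G)$ is intransitive ($k=2$, socle factors not permuted): there $G$ has a regular non-abelian minimal normal subgroup and Lemma \ref{lem:regnorm} gives $G=D(G)$, so your blanket assertion that $\pi(G)$ is primitive needs this caveat.
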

\begin{proof}
Let $N=T^k$ be the socle of $G$ and let $\alpha\in \Omega$. We may assume that $N_\alpha=\{(t,t,\ldots,t)\mid t\in T\}$ and by Lemma \ref{lem:socle} we have $N\leqslant D(G)$. Since $N$ is transitive we have $G=NG_\alpha$. Thus it remains to determine which elements of $G_\alpha$ lie in $D(G)$. 

Let $\pi:G\rightarrow S_k$ be the permutation representation of $G$ on the set of $k$ simple direct factors of $N$. By Lemma \ref{lem:regnorm} we only need to consider the case where $\pi(G)$ is transitive and primitive. Since $G=NG_\alpha$ we have that $\pi(G)=\pi(G_\alpha)$. Now $G_\alpha\leqslant \Aut(T)\times S_k$. Identifying $\Omega$ with the set of cosets of $N_\alpha$ in $N$ we see that for $\tau\in\Aut(T)$ we have $(N_\alpha(t_1,\ldots,t_k))^\tau=N_\alpha(t_1^\tau,\ldots,t_k^\tau)$, while for $\sigma\in S_k$ we have $(N_\alpha(t_1,\ldots,t_k))^\sigma=N_\alpha(t_{1^{\sigma^{-1}}},\ldots,t_{k^{\sigma^{-1}}})$.

  Let $X$ be the preimage in $G_\alpha$ of the stabiliser in $S_k$ of the first entry and let $g=\tau\sigma\in X$ with $\tau\in \Aut(T)$ and $\sigma\in S_k$.  By Lemma \ref{lem:fpf}, there exists $t\in T\backslash \{1\}$ such that $t^\tau=t$.  Then $g$ fixes both the coset $N_\alpha$ and the coset $N_\alpha(t,1,\ldots,1)$. It follows from Theorem \ref{thm1}(b) that $X\leqslant D(G)$. Since $\pi(G)$ is a primitive subgroup of $S_k$, $X$ is a maximal subgroup of $G_\alpha$. Suppose first that $\pi(X)\neq 1$. Then there exists $h\in G_\alpha\backslash X$ such that $h$ fixes the second simple direct factor of $N$. Then $h$ fixes the two distinct cosets $N_\alpha$ and $N_\alpha(1,t,1,\ldots,1)$, where $t\in T$ is fixed by $\tau$. This again implies that $h\in D(G)$ and since $G_\alpha=\langle X,h\rangle$ it follows that $G_\alpha\leqslant D(G)$. Thus $G=D(G)$. Hence if $G\neq D(G)$ then we must have that $\pi(X)=1$, that is, $\pi(G)$ is a regular primitive subgroup of $S_k$. Thus $k$ is a prime, $\pi(G)=C_k$, $D(G)=NX$ and $|G:D(G)|=|G:NX|=k$. 

It remains to show that $k$ is coprime to $|T|$. Suppose to the contrary that
$k$ divides $|T|$. Choose $g\in G\setminus NX$. Without loss of generality,
$g=\tau(1,2,\ldots,k)$. Since $\tau$ and $(1,\ldots,k)$ commute, we can choose
$g$ so that $\tau$ has order a power of $k$ (raising $g$ to a power coprime to
$k$ if necessary). Now we can find $s\in T$ with order $k$ and fixed by $\tau$,
as follows: let $P$ be a Sylow $k$-subgroup of $T\langle\tau\rangle$ containing
$\tau$, and choose $s$ to be an element of order $k$ in $Z(P)\cap T$.

Consider the coset $N_\alpha(s,s^2,\ldots,s^{k-1},1)$. We have
\begin{align*}
(N_\alpha(s,s^2,\ldots,s^{k-1},1))^g&=N_\alpha(1,s^\tau,(s^\tau)^2,\ldots,(s^\tau)^{k-1})\\
  &=N_\alpha(1,s,s^2,\ldots,s^{k-1})\\
  &=N_\alpha(s,s^2,\ldots,s^{k-1},1).
 \end{align*}
   Thus $g$ fixes two elements of $\Omega$ and so by Theorem \ref{thm1}(b) it follows that $g\in D(G)$. Since $G=\langle NX,g\rangle$, it follows that $G=D(G)$, a contradiction. Hence $k$ is coprime to $|T|$.

By the Odd Order Theorem, $k$ is odd.
\end{proof}

\subsection{Product action}

Now we discuss the product action case. By \cite[(2.2)]{kovacs}, we may assume that $G$ is contained in $H\wr K$,
where $H$ is the group induced on one coordinate by its stabiliser in $G$, and $K$
the permutation group induced on the coordinates; thus $n=m^k$, where $m$
and $k$ are the degrees of $H$ and $K$ respectively.

\begin{prop}
With the above hypotheses,
\[|G:D(G)| \leqslant k|H:D(H)|.\]
\label{p:product}
\end{prop}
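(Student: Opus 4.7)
My plan is to project onto the induced group on one coordinate. Let $S\le G$ be the stabiliser in $G$ of the first coordinate (viewed as a point of $\{1,\dots,k\}$, not of $\Omega$); orbit-stabiliser gives $|G:S|\le k$, and the Kov\'acs reduction's choice of $H$ makes the map $\pi\colon S\to H$, $(h_1,\dots,h_k)\sigma\mapsto h_1$, a surjective group homomorphism (well-defined because $\sigma$ fixes $1$). The key claim will be that $\pi^{-1}(D(H))\subseteq D(G)$; from this we get
\[|S:S\cap D(G)|\le|S:\pi^{-1}(D(H))|=|H:D(H)|,\]
and combining with $|G:D(G)S|\le|G:S|\le k$ in the identity $|G:D(G)|=|G:D(G)S|\cdot|S:S\cap D(G)|$ produces the stated bound.

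For the key claim I would argue in two steps, each reducing to Theorem~\ref{t:basic}(b). First, $\ker\pi\le D(G)$: an element $s=(1,h_2,\dots,h_k)\sigma\in\ker\pi$ fixes every value of the first coordinate, so its fixed points on $\Delta^k$ are the $m$-fold lift of the fixed points of the induced action on $\Delta^{k-1}$, giving a fixed-point count in $\{0,m,2m,\dots\}$; since $m\ge 2$ this is never $1$, so $s\in D(G)$. Second, any preimage $t=(d,h_2,\dots,h_k)\sigma\in\pi^{-1}(d)$ of a derangement $d$ of $H$ is itself a derangement of $G$, because $\sigma$ fixes $1$ so the cycle $\{1\}$ of $\sigma$ contributes cycle product $d$, and the standard cycle-product criterion for fixed points in the product action then forces $t$ to have no fixed point on $\Delta^k$.

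Putting the two steps together: given $s\in S$ with $\pi(s)=h\in D(H)$, factor $h=d_1\cdots d_r$ into derangements of $H$, lift each $d_i$ via surjectivity of $\pi$ to $t_i\in S$ (which is then a derangement of $G$ by the second step), and set $t=t_1\cdots t_r\in D(G)$. Then $\pi(st^{-1})=1$, so $st^{-1}\in\ker\pi\le D(G)$ by the first step, giving $s\in D(G)$ and completing the argument. The main subtlety is choosing $S$ rather than the base group $B=G\cap H^k$: coordinate projection on $B$ need not surject onto $H$, but on $S$ it does by construction, and the fact that $\ker\pi$ acts trivially on a whole copy of $\Delta$ inside $\Omega$ is precisely what forces fixed-point counts to be multiples of $m$, keeping $\ker\pi$ inside $D(G)$ and making the final bound linear in $k$ rather than growing like $|H:D(H)|^k$.
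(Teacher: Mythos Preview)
Your proof is correct and follows the same route as the paper: pass to the stabiliser $S$ of one coordinate, project onto $H$, and show that $\pi^{-1}(D(H))\le D(G)$ because preimages of derangements of $H$ are derangements of $G$. Your separate treatment of $\ker\pi$ via Theorem~\ref{t:basic}(b) is an extra detail the paper leaves implicit (there, the full preimage of the set of derangements of $H$ already generates all of $\phi^{-1}(D(H))$, kernel included), but the overall argument is otherwise identical.
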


\begin{proof}
Let $G_1$ be the subgroup of $G$ fixing a coordinate. Then $|G:G_1|=k$, and
there is an epimorphism $\phi:G_1\to H$. Let $G_2=D(H)\phi^{-1}$, so that
$|G_1:G_2|=|H:D(H)|$. So we are done if we can show that $G_2\leqslant D(G)$.

But a generator of $G_2$ has no fixed points on the first coordinate of the
product space, so has no fixed points on the whole space. (If a tuple is
fixed then all its coordinates must be fixed.) The result follows.
\end{proof}

We note that the product action examples given in Section \ref{sec:eg} show that this bound is sharp.

\begin{cor}\label{prodbound}
If the primitive group $G$ is contained in a wreath product action as above,
and $G$ is not a Frobenius group, then $|G:D(G)|\leqslant\sqrt{n}-1$.
\end{cor}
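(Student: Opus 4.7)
The plan is to combine Proposition~\ref{p:product} with the already-established cases of Theorem~\ref{t:prim} applied to the inner primitive group $H$. Because $G$ is primitive and preserves a product structure, the O'Nan--Scott theorem forces $H$ to be itself a primitive group with non-abelian simple socle, hence of almost simple or diagonal type; in particular $H$ is not a Frobenius group and $m\geqslant 5$.

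By the cases of Theorem~\ref{t:prim} already handled for almost simple and diagonal groups, I therefore have either $H=D(H)$ or $|H:D(H)|\leqslant\sqrt{m}-1$. Writing $s=\sqrt{m}$, I split into these two subcases. If $H=D(H)$ then Proposition~\ref{p:product} gives $|G:D(G)|\leqslant k$, and since $m\geqslant 5$ and $k\geqslant 2$ a routine induction on $k$ (comparing the exponential $m^{k/2}$ with the linear $k+1$) shows $k\leqslant m^{k/2}-1=\sqrt{n}-1$. If $H\ne D(H)$, the proposition yields $|G:D(G)|\leqslant k(s-1)$, and one factors
\[s^k-1=(s-1)\bigl(s^{k-1}+s^{k-2}+\cdots+s+1\bigr);\]
cancelling the positive factor $s-1$ reduces the desired inequality $k(s-1)\leqslant s^k-1$ to $k\leqslant\sum_{j=0}^{k-1}s^j$, which is immediate because the sum contains $k$ terms each at least $1$.

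Once Proposition~\ref{p:product} is in place, the argument is essentially a bookkeeping exercise, so there is no serious technical obstacle. The key structural input needed to set up the induction is that $H$ is almost simple or of diagonal type: this simultaneously guarantees that $H$ is not a Frobenius group (so the inductive bound on $|H:D(H)|$ is available) and that $m$ is large enough (certainly $m\geqslant 5$) for the $H=D(H)$ case to succeed.
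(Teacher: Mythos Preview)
Your argument is correct for the purpose it serves in Section~5 (proving Theorem~\ref{t:prim} for non-affine $G$), but it follows a genuinely different route from the paper and is strictly less general.

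The paper uses only the universal bound $|H:D(H)|\mid m-1$ from Corollary~\ref{cor1}, and then works harder: for $k\geqslant3$ an elementary inequality suffices apart from the finitely many pairs $k=3$, $3\leqslant m\leqslant7$, which are dispatched by a computer check; for $k=2$ the inequality $2(m-1)\leqslant m-1$ fails outright, so the paper analyses the subdirect product $G\cap(H\times H)$ in detail, reducing to the case where $H$ is sharply $2$-transitive and showing that then either $G$ is Frobenius or $D(G)$ is at least twice as large as the regular subgroup. You instead feed in the sharper bound $|H:D(H)|\leqslant\sqrt{m}-1$, available because (in the non-affine product-action case of O'Nan--Scott) the component $H$ is almost simple or of diagonal type, and then the arithmetic is painless: no computer check, no $k=2$ case analysis. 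This is cleaner, at the cost of a forward reference to the almost simple bound in Section~5.3 and a mild imprecision in your phrasing (``non-abelian simple socle'' should be ``non-abelian socle''; and strictly $H$ might itself be of product type, though induction on degree handles this).

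The real loss is generality. Your appeal to O'Nan--Scott to force $H$ into the almost simple or diagonal pigeonholes silently assumes $G$ is not affine. But the corollary as stated does not exclude affine $G$, and the paper later \emph{uses} it for affine groups: in Section~6, the imprimitive linear case is disposed of precisely by observing that ``the result follows from Corollary~\ref{prodbound} (whose proof did not assume that $G$ is not affine)''. When $G$ is affine with an imprimitive linear point stabiliser, the component $H$ is itself affine and may well be a Frobenius group with $|H:D(H)|=m-1$, so your inductive input collapses. Thus your proof establishes Theorem~\ref{t:prim} in the product-action case, but does not establish Corollary~\ref{prodbound} in the generality the paper needs.
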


\begin{proof}
We have $|H:D(H)|\mid m-1$. Note that primitivity requires $m>2$.
If $k,m\geqslant3$, then $k(m-1)\leqslant m^{k/2}-1$ except for the cases $k=3$,
$3\leqslant m\le 7$. These cases can be tested by computer, and give no
counterexamples.

Suppose that $k=2$, so that $G\leqslant H\wr C_2$.
If $|H:D(H)|<m-1$, then $|H:D(H)|\leqslant(m-1)/2$, and so $|G:D(G)|\leqslant
m-1$ by Proposition~\ref{p:product}, as required. So we may assume that
$|H:D(H)|=m-1$, so that $H$ is sharply $2$-transitive.  Thus, $H=P\rtimes Q$,
where $P$ is the Frobenius kernel and $Q$ the complement. 

The intersection $K$ of $G$ with the base group of the wreath product is
a subdirect product of two copies of $H$, containing $P\times P$ and
invariant under an interchange of the factors. This is an extension of $R^2$ by
$C$, where $R\geqslant P$ has order $rm$, say, and $C$ is a quotient of $Q$ of
order $(m-1)/r$. So $|K|=m^2(m-1)r$. Now $R^2\leqslant D(G)$, since each
element of one factor can be combined with a derangement in the other to give a
derangement in $G$. So $|G|=2m^2(m-1)r$ and $|D(G)|\ge(rm)^2$, giving
$|G:D(G)|\leqslant 2(m-1)/r$.  So we are done unless $r=1$, in which case $R=P$.

In this case, if $D(G)=P^2$, then it is regular, and so $G$ is a Frobenius
group; if not, then $|D(G)|\geqslant 2m^2$, and so $|G:D(G)|\leqslant m-1$, as
required.
\end{proof}

\subsection{Almost simple type}

We now prove Theorem~\ref{t:prim} for almost simple primitive groups.

\begin{lemma}
Let $G$ be an almost simple primitive permutation group of degree $n$.
Then $|G:D(G)|\leqslant \sqrt{n}-1$.
\end{lemma}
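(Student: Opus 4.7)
The plan is to use Lemma~\ref{lem:socle} to reduce the statement to a numerical comparison between $|G:T|$ and the index of a point stabiliser, and then to verify this family by family via the Classification of Finite Simple Groups.

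Writing $T=\mathrm{soc}(G)$, we have $T\leqslant G\leqslant\Aut(T)$ with $T$ a non-abelian simple group. Since $G$ is primitive, $T$ is the unique minimal normal subgroup and is therefore transitive on $\Omega$, so $n=[T:T_\alpha]$. By Lemma~\ref{lem:socle}, $T\leqslant D(G)$, hence $|G:D(G)|$ divides $|G:T|$, which in turn divides $|\Out(T)|$. We may assume $G>T$ (otherwise there is nothing to prove), so it suffices to establish
\[ [T:T_\alpha]\,\geqslant\,(|G:T|+1)^{2}. \]

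First I would dispose of the alternating and sporadic cases. For $T=A_m$ with $m\geqslant 5$ we have $|\Out(T)|\leqslant 4$ (with equality only for $m=6$), and the minimal degree of a primitive action of $G$ is at least $m$, so the bound is clear for all sufficiently large $m$; the handful of remaining small cases are either forced to satisfy $D(G)=G$ by exhibiting explicit derangements (products of disjoint cycles of appropriate shape) or can be checked directly in \textsc{Magma}. Sporadic $T$ have $|\Out(T)|\leqslant 2$, while their minimal primitive indices vastly exceed~$9$, so they cause no difficulty.

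The bulk of the work concerns groups of Lie type. For exceptional types the outer automorphism group is small relative to the index of any maximal subgroup (by the Liebeck--Saxl--Seitz bounds on subgroups of exceptional groups), and the bound follows easily. For classical groups $|\Out(T)|$ is at worst of order $d\,f\,\gcd(d,q\pm 1)$, while the minimal index of a maximal subgroup grows essentially as a power of $q^{d-1}$ (Cooperstein, Kleidman--Liebeck); the comparison is favourable except in low-rank cases such as $\psl(2,q)$, $\psl(3,q)$ and $\psu(3,q)$, which require individual attention using known explicit minimal degrees (for example $q+1$ for $\psl(2,q)$).

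The main obstacle is managing the boundary cases: small-degree actions of almost simple groups in which the diagonal, field and graph contributions to $\Out(T)$ combine to make $|\Out(T)|$ comparable to $\sqrt{n}$. In those situations one must verify either that the specific $G$ under consideration does not realise the full outer automorphism group (so that $|G:T|$ is much smaller than $|\Out(T)|$), or that enough elements outside $T$ are derangements to force $D(G)=G$. The construction used in the examples of Section~\ref{sec:eg}, together with the analysis of derangements in cosets of simple groups in~\cite{gms}, supplies precisely the input needed to settle these residual cases.
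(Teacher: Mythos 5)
Your proposal follows essentially the same route as the paper: reduce via Lemma~\ref{lem:socle} to the comparison $|\Out(T)|\leqslant\sqrt{n_0}-1$ (where $n_0$ is the minimal faithful permutation degree of $T$), verify this family by family using the known values of $|\Out(T)|$ and $n_0$, and finish the short list of exceptional cases by a direct (computer-assisted) check. The paper's endgame is simply a \textsc{Magma} verification for the residual degrees ($n<36$, or $n<169$ when $T=\psl(3,4)$), which is the same finite check you gesture at.
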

\begin{proof}
If $G$ is almost simple with socle $T$ then, by Lemma \ref{lem:socle},
$T\leqslant D(G)\leqslant G\leqslant \Aut(T)$, so $|G:D(G)|$ is bounded by the
order of the outer automorphism group of $T$.
On the other hand, $n$ is at least the degree $n_0$ of the smallest faithful
permutation representation of $T$.  The outer automorphism group of a sporadic simple group has order at most $2$, while from \cite{atlas} we see that
$n_0\geqslant 11$. Similarly, the outer automorphism group of $A_n$ has order
$2$ unless $n=6$, while $n_0=n$. The values for $n_0$ when $T$ is a group of Lie type are given in \cite[Table 4]{GMPS} and the values for $\Out(T)$ are given in \cite[Tables 5.1A and 5.1B]{KL}. We find that the only simple groups with 
$T$ for which $|\Out(T)|>\sqrt{n_0}-1$ are:
\begin{itemize}\itemsep0pt
\item
$T=A_n$ ($n=5,7,8$), $|\Out(T)|=2$, $n_0=n$;
\item
$T=A_6$, $|\Out(T)|=4$, $n_0=6$;
\item
$T=\psl(3,2)$, $|\Out(T)|=2$, $n_0=7$;
\item
$T=\psl(3,4)$, $|\Out(T)|=12$, $n_0=21$;
\item $T=\psl(2,2^f)$ ($f=3,4,5$), $|\Out(T)|=f$, $n_0=2^f+1$.
\end{itemize}

Thus if $G$ is a counterexample, either $n< 36$ or $T=\psl(3,4)$ and $n<169$. A \textsc{Magma} calculation shows that no such counterexamples exist.
\end{proof}

In this case we can say much more. The memoir by Guralnick, M\"uller and
Saxl~\cite{gms} defines a pair of permutation groups $(X,Y)$ to be
\emph{exceptional} if $Y\lhd X$ and $X$ fixes no non-trivial $Y$-orbit on
ordered pairs. They determine all exceptional pairs where $X$ is almost simple
and $X/Y$ is cyclic. This applies to our situation, since if $D(G)\ne G$ then Theorem \ref{thm1}(d) implies that
$(G,D(G))$ is exceptional. Hence if $D(G)<H\leqslant G$ with $H/D(G)$ cyclic and
$G$ almost simple then $D(G)$ must occur in their list.

\begin{theorem}\cite[Theorem 1.5]{gms}\label{thm:gms}
Let $G$ be a primitive almost simple group of degree $n$ and with socle $T$ such that $D(G)\neq G$. Then one of the following holds:
\begin{enumerate}
\item\label{subfield} $T$ is a group of Lie type and $T_\alpha$ is the centraliser in $T$ of a field automorphism of odd prime order $r$. Moreover, $r$ is not the characteristic of $T$, unless $T=\psl(2,q)$;
\item $T=\psl(2,2^f)$ and $T_\alpha=D_{2(2^f+1)}$ with $f\geqslant 3$ odd;
\item $T=\psl(2,p^f)$ and  $T_\alpha\cong D_{p^f-1}$ with $p$ odd and $f$ even;
\item $T=\psl(2,3^f)$ and $T_\alpha= D_{3^f+1},$ with $f\geqslant 3$ odd;
\item $T=Sz(2^f)$ and $T_\alpha$ is the normaliser of a Sylow 5-subgroup of $T$;
\item $T=\psu(3,2^a)$ with $a>1$ odd  and $T_\alpha$ is the stabiliser in $T$ of a decomposition of the 3-dimensional space into the direct sum of three orthogonal nonsingular 1-spaces.
\end{enumerate}
\end{theorem}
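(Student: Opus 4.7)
The plan is to exploit the exceptionality reformulation given just before the statement: if $G$ is almost simple and primitive with $D(G)\ne G$, then Theorem~\ref{thm1}(d) makes $(G,D(G))$ exceptional, and picking any subgroup $H$ with $D(G)<H\leqslant G$ and $H/D(G)$ cyclic of prime order $r$ produces an exceptional pair $(H,D(G))$ with cyclic quotient. The concrete task is therefore to classify almost simple exceptional pairs $(X,Y)$ with $X/Y$ of prime order, which, since $Y\supseteq\mathrm{soc}(X)=T$ by Lemma~\ref{lem:socle}, amounts to deciding, for each possible socle $T$ and each outer prime $r\bigm||\Out(T)|$, which maximal subgroups $T_\alpha$ of $T$ yield a permutation action in which every element of $\sigma Y$ has exactly one fixed point, where $\sigma$ generates $X/Y$.

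The strategy is then driven by the Classification of Finite Simple Groups. For sporadic $T$ one has $|\Out(T)|\leqslant 2$, so only $r=2$ arises and the cases can be ruled out by direct inspection of the Atlas. For $T=A_n$ the outer automorphism group again has order $2$ (or $4$ when $n=6$), so one needs only check maximal subgroups of $S_n$ and $\Aut(A_6)$; here the exceptionality condition, combined with Jordan-type lower bounds on derangements of $S_n$ in natural and small index actions, excludes everything. The heart of the proof is the Lie type case. The principal tool is Shintani descent (equivalently a careful application of Lang--Steinberg): if $\sigma$ is a field or graph automorphism of $T$ of prime order $r$, then the $\sigma$-fixed cosets in $T/T_\alpha$ are in bijection with certain double cosets $T_\alpha\backslash T/C_T(\sigma)$, and demanding that every element of $\sigma Y$ fix exactly one point gives a rigid numerical identity linking $|T:T_\alpha|$, $|C_T(\sigma)|$ and the fusion pattern of $\sigma$-conjugates.

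Coupling this identity with the Aschbacher--Liebeck--Seitz description of maximal subgroups of Lie type groups, the generic solution forces $T_\alpha$ to be essentially $C_T(\sigma)$, which is case~(a); the constraint that $r$ be odd and, outside $\psl(2,q)$, coprime to the characteristic then drops out because even-order or characteristic-order field automorphisms fail to give a unique fixed coset in enough places. The exceptional families (b)--(f) then emerge from the low-rank groups where the subgroup lattice is exceptionally rich or where element orders collide with subfield indices. The main obstacle is precisely this residual analysis in small rank: in $\psl(2,q)$, $\psu(3,q)$ and $Sz(q)$ one must use detailed information on dihedral, Frobenius and Sylow-normaliser maximal subgroups, together with character-theoretic computations of permutation characters, to verify that the exceptionality condition really does pick out the listed dihedral and decomposition stabiliser subgroups and nothing else; as usual for CFSG-based classifications, keeping track of conjugacy of outer automorphisms and of the precise parity conditions on $f$ is where most of the care is needed.
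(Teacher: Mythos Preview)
Your opening reduction matches the paper exactly: the paper's entire argument for Theorem~\ref{thm:gms} is the paragraph preceding it, namely that Theorem~\ref{thm1}(d) forces $(G,D(G))$ to be exceptional, and that choosing $H$ with $D(G)<H\leqslant G$ and $H/D(G)$ cyclic places $(H,D(G))$ in the Guralnick--M\"uller--Saxl list. The paper then simply \emph{cites} \cite[Theorem~1.5]{gms} as a black box; it does not reprove it.

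Everything after your first paragraph is therefore not a comparison point with the paper but an attempt to summarise the proof of the cited result itself. As a roadmap of how \cite{gms} proceeds it is broadly accurate---CFSG, sporadic and alternating cases handled directly, Lie type via Lang--Steinberg/Shintani-type descent reducing to identifying $T_\alpha$ with $C_T(\sigma)$, and residual low-rank analysis yielding (b)--(f). But it is only a roadmap: the numerical identity you allude to, the verification that even-order or defining-characteristic field automorphisms fail, and the character computations in $\psl(2,q)$, $Sz(q)$, $\psu(3,q)$ each occupy substantial portions of the memoir, and your sketch does not supply them. So your proposal is correct insofar as it reproduces the paper's reduction and points at the right machinery for the cited theorem, but the paper itself makes no claim to prove \cite[Theorem~1.5]{gms}, and neither does your outline constitute a proof of it.
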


We currently do not know any examples here where $G/D(G)$ is not cyclic.

\section{Affine primitive groups}

As we noted earlier, we have not been able to prove the bound
$|G:D(G)|\leqslant\sqrt{n}-1$ for affine primitive groups which are not
Frobenius. We outline here what we have been able to prove.

Recall that it suffices to show that, if $H\leqslant\gl(d,p)$ for prime $p$
and $H$ is irreducible but not semiregular, then $|H:R(H)|\le p^{d/2}-1$. We
work in greater generality, with a view towards Conjecture~\ref{c:linear}.

So let $H\le\gl(d,q)$ be an irreducible linear group. We distinguish three
cases:
\begin{itemize}[align=left]
\item[Case 1:] $R(H)=1$.
\item[Case 2:] $R(H)>1$ and $R(H)$ is reducible.
\item[Case 3:] $R(H)$ is irreducible.
\end{itemize}

\begin{lemma}
Case 1 occurs if and only if $H$ is semiregular on non-zero elements.
\end{lemma}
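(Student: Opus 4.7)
The plan is to unwind the two definitions and observe that they say the same thing, via the elementary linear-algebra fact that a linear map $h$ on $V$ has eigenvalue $1$ if and only if $\ker(h - I)$ is non-trivial, i.e., if and only if $h$ fixes some non-zero vector of $V$. Once this reformulation is in place, $R(H)$ can be described as the subgroup generated by those $h \in H$ that have a non-zero fixed vector, and the equivalence becomes essentially tautological.

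For the forward direction, assume $R(H) = 1$. Then no non-identity element of $H$ has eigenvalue $1$, so by the observation above, no non-identity element of $H$ fixes any non-zero vector of $V$. This is precisely the statement that the stabiliser in $H$ of every non-zero $v \in V$ is trivial, i.e., $H$ acts semiregularly on $V \setminus \{0\}$.

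For the converse, assume $H$ is semiregular on $V \setminus \{0\}$. Then for every $h \in H \setminus \{1\}$ and every $v \in V \setminus \{0\}$, we have $hv \neq v$, so $h$ fixes no non-zero vector, and hence has no eigenvalue equal to $1$. Therefore the set of elements of $H$ with eigenvalue $1$ is $\{1\}$, whose generated subgroup is trivial, and so $R(H) = 1$.

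There is no real obstacle here; the only content is the translation between the spectral condition (eigenvalue $1$) and the dynamical condition (fixed non-zero vector), which is immediate from the definition of eigenvalue.
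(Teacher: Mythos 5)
Your proof is correct and is essentially the paper's own argument: both directions reduce to the observation that a linear map has eigenvalue $1$ exactly when it fixes a non-zero vector, which the paper states more tersely ("the converse is clear"). No issues.
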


\begin{proof}
If $R(H)=1$, then every element of $H\setminus\{1\}$ has no eigenvalues $1$,
and so fixes no non-zero vector. The converse is clear.
\end{proof}

\begin{lemma}
If Case 2 occurs, then $H$ preserves a direct sum or tensor product
decomposition of $V$.
\end{lemma}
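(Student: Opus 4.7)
The plan is to establish first that $R(H)$ is normal in $H$, and then invoke Clifford's theorem. Since conjugate matrices have the same eigenvalues, the set of elements of $H$ with eigenvalue~$1$ is closed under $H$-conjugation, hence $R(H) \trianglelefteq H$. As $H$ is irreducible on $V$ while $R(H)$ is a nontrivial reducible normal subgroup, Clifford's theorem is directly applicable.

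Next I would apply Clifford's theorem to write $V = V_1 \oplus \cdots \oplus V_t$ as the direct sum of its $R(H)$-homogeneous components, and note that $H$ permutes the $V_i$ transitively. In the case $t \geq 2$, this is a nontrivial direct-sum decomposition preserved by $H$, and we are done.

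In the remaining case $t = 1$, the module $V$ is $R(H)$-homogeneous, so $V \cong W^m$ for some irreducible $R(H)$-submodule $W$; since $R(H)$ is reducible, $W$ is a proper submodule of $V$, which forces $m \geq 2$ and $\dim W < d$. I would then use the standard Clifford-theoretic identification $V \cong W \otimes_F U$, where $F = \gf(q)$ and $U = \mathrm{Hom}_{R(H)}(W, V)$ (so that $\dim U = m$), and check that the original $H$-action on $V$ lies inside the stabiliser in $\gl(V)$ of this tensor product structure, namely the central product $\gl(W) \circ \gl(U)$.

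The main obstacle is this last verification in the homogeneous case: while $H$ normalises $R(H)$, the induced actions of $H$ on the factors $W$ and $U$ are in general only projective rather than linear. What is required, however, is merely that $H$ lies in the stabiliser of the tensor decomposition inside $\gl(V)$, which is weaker. This follows because the conjugation action of $H$ on $R(H)$ fixes the isomorphism class of $W$ (since $V$ is $R(H)$-homogeneous), so each $h \in H$ sends a ``pure tensor'' $W \otimes u$ to some $W \otimes u'$, thereby preserving the tensor structure on $V$. This gives the desired containment of $H$ in the stabiliser of a nontrivial tensor product decomposition of $V$.
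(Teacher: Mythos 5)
Your argument is correct and is essentially the paper's proof: the paper takes a minimal $R(H)$-invariant subspace, forms its $H$-translates, and sorts them into isomorphism classes of $R(H)$-modules, which amounts to reproving by hand exactly the piece of Clifford's theorem you invoke, and it arrives at the same dichotomy (several homogeneous components give a direct sum decomposition preserved by $H$; a single one gives $V\cong W\otimes U$). If anything, your final step explaining why $H$ stabilises the tensor structure in the homogeneous case is more explicit than the paper's, which simply asserts that $V\cong W\otimes U$ with $R(H)$ acting on the first factor.
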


\begin{proof}
Let $W$ be a minimal non-zero $R(H)$-invariant subspace. Let
$\mathcal{S}=\{Wg:g\in H\}$. Then every subspace in $\mathcal{S}$ is
$R(H)$-invariant. By minimality, any two members of $\mathcal{S}$ intersect in
$\{0\}$.
Also, the subspace $\langle\mathcal{S}\rangle$ is $H$-invariant. Since $H$ is
irreducible, $\langle\mathcal{S}\rangle=V$.  Note that Proposition \ref{p:RH} implies that $H/R(H)$ permutes $\mathcal{S}$
regularly. Let $\dim(W)=e$.

\subparagraph{Case 2A:} $V=\bigoplus\{W:W\in S\}$. Then $|\mathcal{S}|=d/e$ and
$H$ preserves this direct sum decomposition.

\subparagraph{Case 2B:} $|\mathcal{S}|>d/e$. 

We claim there is a subset of $\mathcal{S}$ whose direct sum is $V$. For
choose a subset of $\mathcal{S}$, say $\mathcal{S}_0$, maximal subject to
generating its direct sum, and suppose $U\in\mathcal{S}\setminus\mathcal{S}_0$.
Let $X$ be the direct sum of the spaces in $\mathcal{S}_0$. Then $X$ is also
$R(H)$-invariant, and so is its intersection with $U$. If
$X\cap U=\{0\}$, then $\mathcal{S}_0\cup\{U\}$ also generates its direct
sum, contrary to assumption. So $U\subseteq X$. But if this holds for all
$U\in\mathcal{S}\setminus\mathcal{S}_0$, then the span of the spaces in
$\mathcal{S}$ is $W$, contradicting the fact that $H$ is irreducible.

Suppose that $V=W_1\oplus\cdots\oplus W_k$, where $W_i\in\mathcal{S}$. If
$W'$ is another subspace in $\mathcal{S}$, then each vector in $W'$ has unique
projections onto at least two $W_i$. Since $R(H)$ fixes all these spaces, we
have $R(H)$-invariant isomorphisms between them.

Now define a relation on $\mathcal{S}$ by the rule that $U_1\sim U_2$ if the
actions of $R(H)$ on $U_1$ and $U_2$ are isomorphic. The result of the
preceding paragraph shows that this relation is not the relation of equality,
and it is clearly an equivalence relation. The span of an equivalence class
is a $R(H)$-invariant subspace, which contains no members of any other
equivalence class. So, arguing as before, $V$ is a direct sum of these
subspaces.

If there is more than one equivalence class, then $H$ preserves this direct
sum decomposition.

If there is just one equivalence class, then $V\cong W\otimes U$ for some
space $U$; and $R(H)$ acts on the first factor of the tensor product. 
\end{proof}

Finally, suppose that Case 3 occurs, so $R(H)$ is an irreducible linear group.
In this case, the obvious approach is to apply Aschbacher's Theorem~\cite{Asch} to $H$. We have dealt with some of the cases, but have not completed the
analysis. We make one simple observation.

\begin{lemma}
The conjecture holds if $H$ is a subfield subgroup or an imprimitive linear
group.
\end{lemma}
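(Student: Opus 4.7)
My plan is to handle the imprimitive and subfield cases separately, inducting on $d$ in the former and on $q$ in the latter, and invoking Conjecture~\ref{c:linear} in smaller dimensions or over smaller fields wherever possible. The common theme is to identify a large subgroup of $R(H)$ that is forced by the preserved structure.

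For the imprimitive case, suppose $V = V_1 \oplus \cdots \oplus V_k$ with $k \geq 2$ is preserved by $H$, and $H$ acts transitively on $\{V_1,\ldots,V_k\}$. Let $d_0 = d/k$, let $H_1$ denote the setwise stabiliser of $V_1$ in $H$, and let $L \leq \gl(V_1)$ be the image of $H_1$ acting on $V_1$. I would first show that $L$ acts irreducibly on $V_1$: any proper $L$-invariant subspace $W \subset V_1$ would yield $\sum_{h H_1 \in H/H_1} h(W)$ as a proper $H$-invariant subspace of $V$, contradicting the irreducibility of $H$. Next, the kernel $N_1 = \ker(H_1 \to L)$ acts trivially on $V_1$, so $N_1 \leq R(H)$; its $H$-normal closure $N = \langle N_1,\ldots,N_k\rangle$ (with $N_i$ the analogous kernel for $V_i$) is contained in $R(H)$, and satisfies $|H:N| \leq k\,|L|$. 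Applying the conjecture inductively to $L$ on $V_1$ — either $L$ is semiregular on $V_1 \setminus \{0\}$, so $|L| \leq q^{d_0}-1$, or $|L:R(L)| \leq q^{d_0/2}-1$ — and combining with additional elements of $R(H)$ outside $N$, gives the required bound for $k \geq 3$. The boundary case $k = 2$ needs its own treatment, since the naive bound $2(q^{d/2}-1)$ is too weak; here I would note that any element $\tau \in H$ swapping $V_1$ and $V_2$ fixes every vector of the form $v + \tau(v)$ with $v \in V_1$ and hence lies in $R(H)$, then analyse the subdirect structure of $\ker(H \to C_2)$ inside $L \times L$.

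For the subfield case, $H \leq \gl(d, q_0) \cdot Z$ with $q = q_0^r$ for a prime $r$ and $Z$ the group of scalar matrices in $\gl(d, q)$. Set $H_0 = H \cap \gl(d, q_0) \cdot Z_0$ with $Z_0 = Z(\gl(d, q_0))$; then $|H:H_0|$ divides $(q-1)/(q_0-1)$. Any element of $H$ can be written as $z h_0$ with $z \in Z$ and $h_0 \in \gl(d, q_0)$, and has eigenvalue $1$ on $V$ if and only if $h_0$ has eigenvalue $z^{-1}$. This equivalence relates $R(H)$ to the spectra of elements of $H_0$; bounding the number of scalar twists for which no element of the coset $zH_0$ lies in $R(H)$, and inducting on $q$ (with the base case $q = p$ prime, where no further subfield reduction is possible), should yield $|H:R(H)| \leq q^{d/2}-1$.

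The hardest step will be the subfield analysis, since the scalar twists in $Z$ can shift eigenvalues in subtle ways, and the base of the induction on $q$ falls outside the subfield reduction and must be handled either by a direct computation or by appealing to a different Aschbacher class. In the imprimitive case the delicate point is the boundary $k = 2$, where the normal-closure bound is insufficient and the subdirect structure of $\ker(H \to C_2)$ inside $L \times L$ must be inspected case by case, distinguishing the full product, the diagonal, and intermediate twisted-diagonal possibilities.
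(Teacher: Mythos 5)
Both halves of your argument have genuine gaps, and both are much harder than they need to be because you rebuild machinery the paper already has. For the subfield case the key observation is simply that ``$h$ has eigenvalue $1$'' means $\det(h-I)=0$, a condition on the matrix entries that is insensitive to extending the field; so if $H\leqslant\gl(d,q_0)\leqslant\gl(d,q)$ with $q=q_0^e$ and $e\geqslant 2$, then $R(H)$ is the same group whether computed over $\gf(q_0)$ or over $\gf(q)$, and Proposition~\ref{p:RH} applied over $\gf(q_0)$ gives $|H:R(H)|\leqslant q_0^d-1=q^{d/e}-1\leqslant q^{d/2}-1$ at once. No induction on $q$ and no analysis of scalar twists is required; your version with the scalar group $Z$ adjoined is left as a plan (``should yield'') rather than a proof, and the coset-by-coset eigenvalue analysis you propose is precisely the part you have not carried out.

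In the imprimitive case there are two problems. First, your inductive step invokes Conjecture~\ref{c:linear} for the component group $L\leqslant\gl(V_1)$; but $L$ is an arbitrary irreducible linear group in smaller dimension, not necessarily a subfield or imprimitive one, so the conjecture is not available for it --- it is open, and the lemma is meant to be unconditional. (You could replace that appeal by the unconditional Proposition~\ref{p:RH}, giving $|L:R(L)|\leqslant q^{d_0}-1$, but the resulting bound $k(q^{d_0}-1)$ fails for small $k$ and $q^{d_0}$, which is exactly where the real work lies, and your ``additional elements of $R(H)$ outside $N$'' are never produced.) Second, the $k=2$ analysis is both unfinished and begins with a false claim: an element $\tau$ swapping $V_1$ and $V_2$ satisfies $\tau(v+\tau v)=\tau v+\tau^2 v$, so it fixes $v+\tau v$ only when $\tau^2$ fixes $v$; in general such $\tau$ need not have eigenvalue $1$ at all (this is exactly how imprimitive semiregular groups arise, and they are the exception the conjecture permits). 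The paper sidesteps all of this: an imprimitive $H$ makes $T\rtimes H$ a subgroup of a wreath product in product action, and Corollary~\ref{prodbound} --- whose proof already contains the subdirect-product analysis for $k=2$ and the finite check for small $k\geqslant3$, and whose excluded case is exactly the Frobenius (semiregular) one --- gives $|H:R(H)|=|G:D(G)|\leqslant\sqrt{n}-1=q^{d/2}-1$ directly.
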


\begin{proof}
In the subfield case, suppose that $H\leqslant\gl(d,q_0)\leqslant\gl(d,q)$,
where $q=q_0^e$ with $e>1$. Observing that the eigenvalues of an element of
$H$ are the same whether we regard $H$ as acting on $\gf(q)^d$ or $\gf(q_0)^d$,
we see that  $|H:R(H)|\leqslant q_0^d-1\leqslant q^{d/e}-1$, and the result
follows since $e\geqslant2$.

In the imprimitive case, the semidirect product $T\rtimes H$ is contained in a
wreath product with product action, and the result follows from 
Corollary~\ref{prodbound} (whose proof did not assume that $G$ is not affine).
\end{proof}

\section{On the quotient $G/D(G)$}

In this section we consider the group-theoretic structure of the quotient
$G/D(G)$.

We have seen that any Frobenius complement can occur as this quotient. It turns out that in general the class of groups that
can appear is wider, as the following examples testify.

\begin{ex}\label{eg:small}
\begin{enumerate}
\item Let $X,Y\leqslant \gl(2,5)$ such that $X\cong D_{12}$ and $Y\cong Q_8$. Then $R(X)=D_{12}$, as it is generated by its non-central involutions. Moreover,   any element of $X$ that is not an involution does not have any eigenvalues  in $\gf(5)$. Furthermore, $R(Y)=\langle -I_2\rangle$ and all eigenvalues of elements of $Y$ lie in $\gf(5)$. Let $H=X\circ Y\leqslant \gl(2,5)\circ \gl(2,5)$ acting on the tensor product of two $\gf(5)$-spaces of dimension $2$. (Here $\circ$
denotes central product.) Then all elements of $H$ with 1 as an eigenvalue lie in $X$ and so $H/R(H)=Y/R(Y)\cong C_2^2$. The primitive group $G$ with $G/D(G)\cong C_2^2$ arising from Proposition \ref{p:linear} is the number 41 of degree 625 in the  \textsc{Magma} database.

\item Let $X,Y\leqslant \gl(4,23)$ with $X\cong D_{44}$ and $Y\cong \SL(2,3)$. Both $X$ and  $Y$ are irreducible. Then $R(X)=X$ as it is generated by involutions. Moreover,   all eigenvalues of $X$ lie in $\gf(23)$. The group $Y$ acts semiregularly on the set of 1-dimensional subspaces of $\gf(23)^2$ and so $-I_2$ is the only element of $Y$ with eigenvalues in $\gf(23)$. Let  $H=X\circ Y\leqslant \gl(2,23)\circ \gl(2,23)$ acting on the tensor product of two $\gf(23)$-spaces of dimension 2. Then $H/R(H)=Y/R(Y)\cong A_4$ and so by  Proposition \ref{p:linear} we get a primitive permutation group $G$ of degree $23^4$ with $G/D(G)\cong A_4$.

\item Here take $X,Y\leqslant \gl(2,59)$ with $X\cong D_{116}$ and $Y\cong \SL(2,5)$. The group $Y$ acts semiregularly on the set of 1-dimensional subspaces of $\gf(59)^2$ and so taking $H=X\circ Y\leqslant \gl(2,59)\circ \gl(2,59)$ acting on the tensor product of two $\gf(59)$-spaces of dimension 2 the same argument as above yields a primitive group with $G$ with $G/D(G)\cong A_5$.
\end{enumerate}
\end{ex}

We now give an infinite family of non-Frobenius examples.

\begin{lemma}\label{lem:dihedral}
Let $p$ be a prime and $f\geqslant 1$ such that $q=p^f \equiv -1 \pmod 4$. Then there is a primitive group $G$ such that $G/D(G)\cong D_{q+1}$.
\end{lemma}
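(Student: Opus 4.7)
The plan is to mimic the central-product construction in Example~\ref{eg:small}. Set $V_X=V_Y=\gf(q)^2$. Inside $\gl(2,q)$, take $X$ to be the normaliser of a split torus, that is, the dihedral group of order $2(q-1)$ generated by $r=\mathrm{diag}(a,a^{-1})$ (for $a$ a primitive element of $\gf(q)^\ast$) together with the coordinate swap $J=\left(\begin{smallmatrix}0&1\\1&0\end{smallmatrix}\right)$. Inside $\SL(2,q)\leqslant\gl(2,q)$, take $Y=N_{\SL(2,q)}(T_1)$, where $T_1$ is the non-split torus of order $q+1$. Both $X$ and $Y$ are absolutely irreducible on their respective $2$-spaces, and both have centre $\{\pm I\}$. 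All eigenvalues of elements of $X$ lie in $\gf(q)$, and the non-central involutions of $X$ (with eigenvalues $\{1,-1\}$) generate $X$, so $R(X)=X$.

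The group $Y$ is dicyclic of order $2(q+1)$: any $\tau\in Y\setminus T_1$ satisfies $\tau^2=-I$, because otherwise $\tau$ would itself be the unique involution of $\SL(2,q)$ and would lie in $T_1$. In particular $Y/\{\pm I\}\cong D_{q+1}$. The key claim, where the hypothesis $q\equiv -1\pmod 4$ enters, is that no non-central element of $Y$ has an eigenvalue in $\gf(q)$: for $t\in T_1\setminus\{\pm I\}$ this is the non-split property, while for $\tau\in Y\setminus T_1$ the minimal polynomial is $x^2+1$, which is irreducible over $\gf(q)$ exactly when $-1$ is a non-square, i.e.\ when $q\equiv -1\pmod 4$.

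Now form $H=X\circ Y\leqslant\gl(4,q)$ acting on $V_X\otimes V_Y$. A general element $xy$ of $H$ has eigenvalues of the form $\mu\lambda$, with $\mu$ an eigenvalue of $x$ and $\lambda$ of $y$. Since $\mu\in\gf(q)$ always, $\mu\lambda=1$ forces $\lambda\in\gf(q)$, and by the previous paragraph this forces $y\in\{\pm I\}$, giving $xy\in X$. Hence every element of $H$ with eigenvalue~$1$ lies in $X$, so $R(H)\subseteq X$; the reverse inclusion follows from $R(X)=X$. Therefore $H/R(H)\cong Y/\{\pm I\}\cong D_{q+1}$. As $X$ and $Y$ are absolutely irreducible on their $2$-dimensional spaces, $H$ is absolutely irreducible on $V_X\otimes V_Y\cong\gf(q)^4$, and so $G=\gf(q)^4\rtimes H$ is a primitive affine group. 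Proposition~\ref{p:linear} then yields $G/D(G)\cong H/R(H)\cong D_{q+1}$.

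The main obstacle is the eigenvalue bookkeeping in the central product, and in particular pinpointing exactly where $q\equiv -1\pmod 4$ is used (namely in the irreducibility of $x^2+1$ over $\gf(q)$, without which the elements of $Y\setminus T_1$ would acquire $\gf(q)$-eigenvalues and ruin the computation of $R(H)$). One degenerate case to handle separately is $q=3$, where $a=a^{-1}$ makes the proposed $X$ reducible; but here $D_{q+1}=D_4=V_4$, and such a primitive group has already been exhibited in Example~\ref{eg:small}(a) using $\gf(5)$.
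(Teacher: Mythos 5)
Your proof is correct and is essentially the paper's own argument: the same central product $H=X\circ Y$ of the normaliser of a split torus (dihedral of order $2(q-1)$, all eigenvalues in $\gf(q)$, generated by elements with eigenvalue $1$) with the normaliser of a non-split torus in $\SL(2,q)$ (dicyclic of order $2(q+1)$, no non-central element having a $\gf(q)$-eigenvalue precisely because $x^2+1$ is irreducible when $q\equiv-1\pmod4$), followed by the identical eigenvalue bookkeeping and an appeal to Proposition~\ref{p:linear}. The only differences are cosmetic --- you describe $Y$ abstractly as $N_{\SL(2,q)}(T_1)$ where the paper builds it explicitly from $\gf(q^2)^\ast$ and the Frobenius --- and your separate treatment of the degenerate case $q=3$ (where $X$ as defined is reducible) is a point the paper's proof passes over silently.
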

\begin{proof}
Let $X$ be the subgroup of $\gl(2,q)$ generated by $\begin{pmatrix} u &0\\0&u^{-1}\end{pmatrix}$ (for $u\ne0$) and $\begin{pmatrix} 0&1\\1&0\end{pmatrix}$. Then $X\cong D_{2(q-1)}$.
Since $q\equiv -1\pmod 4$, the group $X$ does not have an element of order 4. Moreover, $X$ is generated by its non-central involutions and these all have 1 as an eigenvalue. Furthermore, all eigenvalues of the elements of $X$ lie in $\gf(q)$.

Let $x,y\in \gf(q^2)$ with $x$ having order $q+1$ and $y$ having order $2(q+1)$. Consider $x$ and $y$ as elements of $\gl(2,q)$. Let $\sigma$ be the field automorphism of $\gf(q^2)$ that raises each element to its $q^{\mathrm{th}}$ power and consider $\sigma$ as an element of $\gl(2,q)$. Then $(y\sigma)^2=y^{q+1} =-I_2=x^{(q+1)/2}$. Thus $y\sigma$ is an element of order 4. Let $Y=\langle x,y\sigma\rangle\leqslant \gl(2,q)$. All elements in $\langle x\rangle$ other than those in $\langle -I_2\rangle$ have no elements in $\gf(q)$ as an eigenvalue. All elements of $Y$ outside $\langle x\rangle$ have order 4 and the condition on $q$ implies that they also have no eigenvalues in $\gf(q)$. Moreover, $x^{y\sigma}=x^q=x^{-1}$ and so $Y/\langle -I_2\rangle\cong D_{q+1}$.

Now take 
$H= X \circ Y \leqslant \gl(2,q) \circ \gl(2,q)$ acting on the tensor product of two $\gf(q)$-spaces of dimension 2. For each $g\in X$ and $h\in Y$, the eigenvalues of the element arising from $(g,h)$ are of the form $\lambda\mu$ where $\lambda $ is an eigenvalue of $g$ and $\mu$ is an eigenvalue of $h$.  Since the elements of  $Y\backslash\langle -I_2\rangle$ do not have elements of $\gf(q)$ as eigenvalues, the elements of $H$ with 1 as an eigenvalue lie in $X$ and so $H/R(H)= Y/Z(Y) \cong D_{q+1}$. Moreover, as $X$ and $Y$ are both irreducible subgroups of $\gl(2,q)$ we have that $H$ is an irreducible subgroup of $\gl(4,q)$.  Thus by Proposition \ref{p:linear}, there exists a primitive group $G$ such that $G/D(G)\cong D_{q+1}$.
\end{proof}

\medskip

Under extra hypotheses, we can restrict the structure of the quotient. For
example:

\begin{prop}
Suppose that the transitive group $G$ has a regular normal subgroup $N$, and
that $G$ splits over $D(G)$, say $G=D(G)\rtimes H$. Then $N$ is nilpotent
and $H$ is isomorphic to a Frobenius complement.
\end{prop}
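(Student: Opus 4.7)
My plan is to show that the subgroup $NH$ of $G$ is a Frobenius group with kernel $N$ and complement $H$; once this is established, Thompson's theorem delivers the nilpotency of $N$, and $H$ being a Frobenius complement is immediate from the definition.

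The first preliminary step is to show $N \leqslant D(G)$: since $N$ acts regularly, every non-identity element of $N$ is a derangement, so $N \leqslant D(G)$. Consequently $N \cap H \leqslant D(G) \cap H = 1$, and therefore $NH = N \rtimes H$ is a genuine internal semidirect product inside $G$.

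The heart of the proof is the claim that $C_N(h) = 1$ for every $h \in H \setminus \{1\}$. By the remark following Lemma~\ref{l:coset} (average number of fixed points in each coset of $D(G)$ equals $1$, combined with part~(b) of Theorem~\ref{thm1}), every element of $G \setminus D(G)$ has exactly one fixed point on $\Omega$; in particular each such $h$ fixes a unique point $\alpha_h$. I would then use the regularity of $N$ to identify $\Omega$ with $N$ by $n \leftrightarrow \alpha_h^{\,n}$, placing the base point at $\alpha_h$. Under this identification, elements of $G_{\alpha_h}$ act on $N$ exactly by conjugation: a direct check gives $\alpha_h^{\,ng} = \alpha_h^{\,g^{-1}ng}$ for $g \in G_{\alpha_h}$. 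Since $h \in G_{\alpha_h}$, the fixed points of $h$ on $\Omega$ correspond precisely to $C_N(h)$. The uniqueness of the fixed point then forces $C_N(h) = \{1\}$.

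Having established that $H$ acts on $N \setminus \{1\}$ semiregularly by conjugation, the subgroup $NH = N \rtimes H$ is a Frobenius group with kernel $N$ and complement $H$. Thompson's theorem then gives that $N$ is nilpotent, and $H$ is isomorphic to (indeed is) a Frobenius complement. The only even slightly delicate point is the identification of the permutation action of $h$ with the conjugation action, which is standard for regular normal subgroups but worth spelling out; conceptually, the main idea is simply that the unique fixed point of $h$ on $\Omega$, translated to the identity of $N$, reveals the fixed-point-freeness of $h$ on $N$ by conjugation.
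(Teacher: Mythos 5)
Your proof is correct. It follows the same overall skeleton as the paper's argument --- show that $NH=N\rtimes H$ is a Frobenius group with kernel $N$ and complement $H$, then invoke Thompson's theorem --- but it handles the crucial step differently, and in a way that is arguably cleaner. The paper first proves that $H$ has a \emph{common} fixed point $\alpha$ and is semiregular on $\Omega\setminus\{\alpha\}$; since a priori the unique fixed points $\alpha_h$ of the various $h\in H\setminus\{1\}$ could differ, this requires a genuine argument (if there were no global fixed point, $H$ would act faithfully as a regular or Frobenius group on each orbit with at least one Frobenius orbit, and the Frobenius kernel, being recognisable as the Fitting subgroup of $H$, would then consist of derangements --- a contradiction). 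Only then does the paper identify $\Omega$ with $N$ based at $\alpha$ to get semiregularity of the conjugation action. You sidestep the common-fixed-point issue entirely by re-basing the identification $\Omega\leftrightarrow N$ at the fixed point $\alpha_h$ of each individual element $h$, which yields $C_N(h)=1$ one element at a time; that condition alone is the standard criterion for $N\rtimes H$ to be Frobenius with kernel $N$, so nothing is lost. Your route avoids the Frobenius-kernel-recognition argument altogether; the paper's route gives the extra geometric information that $H$ lies in a point stabiliser, but that also follows a posteriori from the Frobenius structure. Both the identification $\alpha_h^{\,ng}=\alpha_h^{\,g^{-1}ng}$ and the deduction $H\cap D(G)=1\Rightarrow$ every non-identity element of $H$ has exactly one fixed point (which is immediate from Theorem~\ref{thm1}(b)) are correctly justified.
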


\begin{proof}
Non-identity elements of $H$ have unique fixed points. It follows that $H$
fixes a point $\alpha$ and is semiregular on $\Omega\setminus\{\alpha\}$.
(If not, then $H$ acts faithfully as a regular or Frobenius group on each
orbit, and with at least one Frobenius orbit. But then elements of the
Frobenius kernel $K$ can be recognised -- $K$ is the Fitting subgroup of $H$ --
and so they have no fixed points at all, a contradiction.)

Thus $H$ normalises $N$ and acts semiregularly 
on $N\setminus\{1\}$, so that $NH$ is a Frobenius group with kernel $N$ and
complement $H$. Then $N$ is nilpotent by Thompson's theorem.
\end{proof}

We note that for the examples in Example \ref{eg:small} and Lemma \ref{lem:dihedral}, $G$ does not split over $D(G)$.

On the other hand, every Frobenius complement can occur in a non-Frobenius
group:

\begin{prop}
Let $H$ be a Frobenius complement. Then there is a transitive, non-Frobenius
group $G$ such that $G/D(G)\cong H$.
\end{prop}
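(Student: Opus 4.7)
The plan is to apply Proposition~\ref{p:linear} and build $G$ as an affine group whose linear part $K$ satisfies $K/R(K)\cong H$ while not acting fixed-point-freely on the underlying vector space. To start, I would realise $H$ as a fixed-point-free subgroup of $\gl(V)$ for some elementary abelian $p$-group $V$; this is standard, since from any Frobenius group $F=N\rtimes H$ (which exists by hypothesis) Thompson's theorem gives $N$ nilpotent, and any minimal $H$-invariant subgroup of $Z(N)$ is elementary abelian and carries a faithful fixed-point-free $H$-action.

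Next, I would take $u=\begin{pmatrix}1&1\\0&1\end{pmatrix}\in\gl(2,p)$, a unipotent element of order $p$, set $C=\langle u\rangle$, and form the tensor-product linear group
\[K=\{h\otimes c:h\in H,\ c\in C\}\leqslant\gl(V\otimes\gf(p)^2).\]
This $K$ is isomorphic to $H\times C$, since the only scalar matrix in $C$ is $I$, making the natural surjection $H\times C\to K$ injective. Setting $G=(V\otimes\gf(p)^2)\rtimes K$ gives a transitive group, as its translation subgroup acts regularly.

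The main calculation will be to show $R(K)=I\otimes C$. The eigenvalues of $h\otimes c$ are the products $\mu\lambda$ where $\mu$ is an eigenvalue of $h$ and $\lambda$ of $c$; because $c$ is unipotent, every $\lambda$ equals $1$, so the eigenvalues of $h\otimes c$ coincide with those of $h$. Since $H$ is fixed-point-free on $V$, no nontrivial $h\in H$ has eigenvalue $1$, so the elements of $K$ with eigenvalue $1$ are precisely those of the form $I\otimes c$. These generate $I\otimes C\cong C$, whence $K/R(K)\cong H$ and Proposition~\ref{p:linear} gives $G/D(G)\cong H$.

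It will remain to check that $G$ is not a Frobenius group, which is immediate: $I\otimes u$ lies in the point stabiliser $K$ of $0$ and fixes every $v\otimes w$ with $v\in V$ and $w$ in the one-dimensional fixed subspace of $u$, producing many non-zero fixed vectors. Hence a non-identity element of the point stabiliser of $0$ also fixes another point, contradicting the Frobenius condition. The only step requiring any input beyond routine tensor-product bookkeeping is the initial elementary abelian fixed-point-free realisation of $H$, which comes from Thompson's theorem.
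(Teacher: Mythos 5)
Your proof is correct, but it takes a genuinely different route from the paper. The paper also begins by reducing to an abelian (indeed central) piece of the Frobenius kernel via Thompson's theorem, but then forms $G=N^q\rtimes(H\times C_q)$ in \emph{product action} on $\Delta^q$, with $q$ a prime coprime to $|H|$ permuting the coordinates, and verifies by a direct fixed-point count (telescoping the cycle relations) that every element outside $N^q\rtimes C_q$ has exactly one fixed point, so that $D(G)=N^q\rtimes C_q$. You instead realise $H$ as a fixed-point-free subgroup of $\gl(V)$ with $V$ elementary abelian over $\gf(p)$, tensor with a unipotent cyclic group $C=\langle u\rangle$ of order $p$, and read off $R(K)=I\otimes C$ from the eigenvalue multiplicativity of tensor products; Proposition~\ref{p:linear} then does all the fixed-point counting for you. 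The key observations all check out: fixed-point-freeness of $h$ on $V$ is equivalent to $h$ having no eigenvalue $1$ (a condition stable under field extension), the unipotence of $c$ means $h\otimes c$ has the same eigenvalue set as $h$, the map $H\times C\to K$ is injective because $C$ contains no nontrivial scalars, and $I\otimes u$ fixes the nonzero vectors $v\otimes e_1$, so $G$ is not Frobenius. What your approach buys is economy (no bespoke fixed-point computation) and it always lands in the affine setting of Section~\ref{s:affine}, producing examples of degree $p^{2\dim V}$ in the style of Example~\ref{eg:small} and Lemma~\ref{lem:dihedral}; what the paper's approach buys is a construction that works directly from any Frobenius group $N\rtimes H$ with $N$ abelian (not necessarily elementary abelian) and exhibits the product-action mechanism that recurs elsewhere in the paper. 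Both arguments rely on Thompson's theorem for the initial reduction.
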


\begin{proof}
Suppose that $NH$ is a Frobenius group on a set $\Delta$ with kernel $N$ and complement $H$.
Without loss of generality we may suppose that $N$ is abelian. (For by
Thompson's theorem, $N$ is nilpotent; thus $Z(N)\ne\{1\}$, and $H$ acts 
faithfully and fixed-point-freely on $Z(N)$, so $Z(N)H$ is a Frobenius group.)
For convenience we write $N$ additively below.

Choose a prime $q$ which does not divide $|H|$. Let $G=N^q\rtimes(H\times C_q)$ act on $\Delta^q$ in product action,
where $H$ acts in the same way on each factor and $C_q$ permutes the factors.
We have $N^q\leqslant D(G)$. Moreover, elements of $C_q$ fix the diagonal
elements of $N^q$, so by Theorem \ref{thm1}(b) $C_q\leqslant D(G)$. We show
that elements outside $N^q\rtimes C_q$ have just one fixed point; it follows
that $D(G)=N^q\rtimes C_q$, and so $G/D(G)\cong H$ as required. Since $N^q$
is a regular normal subgroup, we can identify $\Omega$ with $N^q$.

Take an element $g=h(a_1,\ldots,a_q)\sigma^i$, where
$C_q=\langle\sigma\rangle$, $a_1,\ldots,a_q\in N$, and $h\ne1$, and suppose
that $g$ fixes $(x_1,\ldots,x_q)$, with $x_1,\ldots,x_q\in\Delta$.
\begin{itemize}\itemsep0pt
\item[Case 1:] $i=0$. Then
\[(x_1,\ldots,x_q)g=(x_1^h+a_1,x_2^h+a_2,\ldots,x_q^h+a_q).\]
So, if $g$ fixes $(x_1,\ldots,x_q)$, we have $x_i^h+a_i=x_i$ for all $i=1,\ldots,q$.
Since $N^q\rtimes H$ is a Frobenius group and $h\ne1$, there is a unique such element.
\item[Case 2:] $i\ne 0$. Without loss of generality, $i=1$. Then
\[(x_1,\ldots,x_q)g=(x_q^h+a_q,x_1^h+a_1,\ldots,x_{q-1}^h+a_{q-1}).\]
So, if $g$ fixes $(x_1,\ldots,x_q)$, then
\[x_1^h+a_1=x_2,x_2^h+a_2=a_3,\ldots,x_q^h+a_q=x_1.\]
Telescoping these formulae gives $x_1^{h^q}+b_1=x_1$, where
\[b=a_1^{h^{q-1}}+\cdots+a_q.\]
Now $q$ is coprime to $|H|$, so $h^q\ne 1$; the same argument as in Case 1
shows that the value of $x_1$ is uniquely determined. A similar argument
shows that $x_2,\ldots,x_q$ are unique.
\end{itemize}
The proof is complete.
\end{proof}

\begin{rk}
In all examples constructed in this section, the group $G/D(G)$, if not
itself a Frobenius complement, is a quotient of one. So we tentatively propose
the following problem:
\end{rk}

\begin{qn}
Is it true that, for any finite transitive permutation group $G$, the group
$G/D(G)$ is a quotient of a Frobenius complement?
\end{qn}

\section{One more problem}

The derangements in a finite transitive permutation group $G$ form a non-empty
union of conjugacy classes; so, if $G$ is simple, they generate $G$.
In a recent preprint, Larsen, Shalev and Tiep \cite{lst} proved the following
theorem:

\begin{theorem}
Let $G$ be a finite simple transitive permutation group. If $|G|$ is
sufficiently large, then any element of $G$ can be written as the product of
two derangements.
\end{theorem}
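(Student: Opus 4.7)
The plan is to use the Frobenius character-theoretic formula for class multiplication. Since fixed-point-freeness is conjugation-invariant in a transitive action, the set $D$ of derangements in $G$ is a non-empty union of conjugacy classes, and (as noted earlier in the excerpt) $D = D^{-1}$. Hence for every $g \in G$ the number of factorizations $g = d_1 d_2$ with $d_1, d_2 \in D$ admits a clean expansion in irreducible characters:
\begin{equation*}
N(g) = \frac{|D|^2}{|G|} + \frac{1}{|G|}\sum_{1_G \neq \chi \in \mathrm{Irr}(G)} \frac{\hat{D}(\chi)^2\,\overline{\chi(g)}}{\chi(1)},
\end{equation*}
where $\hat{D}(\chi) = \sum_{d \in D}\chi(d)$. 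It suffices to show that, for $|G|$ sufficiently large, the absolute value of the ``error'' sum is strictly less than the main term $|D|^2/|G|$.

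The argument then splits along the lines of the Classification of Finite Simple Groups. The sporadic groups yield only finitely many exceptional cases, which can be inspected directly from their character tables. For alternating groups $A_n$ of large degree, one combines Fomin--Lulov-style character bounds with a uniform lower bound $|D| \geq c|G|$ coming from the Boston--Shalev conjecture (proved by Fulman--Guralnick), so the main term is of order $|G|$. For simple groups of Lie type, the decisive input is the character-ratio bound $|\chi(g)|/\chi(1) \leq \chi(1)^{-\sigma}$ for some uniform $\sigma > 0$ (Liebeck--Shalev, together with the refinements of Larsen--Shalev and Guralnick--Larsen--Tiep), applied both to $\chi(g)$ itself and, after averaging, to $\hat{D}(\chi)/|D|$. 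Combined with the boundedness of the Witten zeta function $\sum_{\chi}\chi(1)^{-s}$ along any family of simple groups of Lie type for suitably small $s > 0$, this will give an error estimate of the form $|E(g)| = o(|D|^2/|G|)$ as $|G| \to \infty$, which is exactly what is needed.

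The main obstacle will be making the character-value estimates both sharp enough and uniform enough to dominate the Witten zeta tail on the few characters of small degree: the Weil characters of classical groups and the minimal-degree characters of exceptional groups are precisely the terms where $\sum_\chi \chi(1)^{-s}$ misbehaves, and they will have to be treated by the sharpest available bounds, typically broken up by the type of $g$ (regular semisimple, unipotent, mixed). A secondary subtlety is that $D$ depends on the particular transitive action chosen, so both the lower bound on $|D|/|G|$ and the bound on $\hat{D}(\chi)$ must be uniform in that action; for general transitive but not primitive actions of a simple group, some reduction via the primitive components of a point stabiliser is presumably needed in order to bring in the Fulman--Guralnick density result.
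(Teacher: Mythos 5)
First, a point of comparison: the paper does not prove this theorem at all. It is quoted verbatim from the preprint of Larsen, Shalev and Tiep \cite{lst}, so there is no internal proof to measure your argument against; the only honest benchmark is whether your proposal would stand on its own as a proof. It does not. What you have written is a correct and well-informed description of the \emph{strategy} behind results of this kind --- the Frobenius class-multiplication formula, the splitting of the error term over $\mathrm{Irr}(G)$, the CFSG case division, character-ratio bounds of Liebeck--Shalev type, the Witten zeta function, and the Fulman--Guralnick proof of the Boston--Shalev conjecture are all the right ingredients --- but every quantitative step is deferred. You say explicitly that ``the main obstacle will be making the character-value estimates both sharp enough and uniform enough,'' which is an accurate self-assessment: that obstacle is the entire content of the theorem. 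Without actual bounds on $\hat{D}(\chi)$ for the small-degree characters (Weil characters of classical groups, minimal characters of exceptional groups), and without a proof that the error sum is $o(|D|^2/|G|)$ uniformly in the rank, the field size, and the choice of point stabiliser, the argument establishes nothing. A plan that correctly identifies where the difficulty lies is not a proof of the statement.

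Two smaller remarks. The reduction from arbitrary transitive actions to primitive ones, which you flag as ``presumably needed,'' is in fact easy and you should just do it: if $H\leqslant K\leqslant G$ and $g$ fixes the coset $xH$, then $xgx^{-1}\in H\leqslant K$, so $g$ fixes $xK$; hence every derangement on $G/K$ is a derangement on $G/H$, and the lower bound $|D|\geqslant c|G|$ for a maximal overgroup $K$ of $H$ transfers to $H$. Second, your formula for $N(g)$ uses $\hat{D}(\chi)^2$ where it should involve $\hat{D}(\chi)\overline{\hat{D}(\chi)} = |\hat{D}(\chi)|^2$ (these agree only because $D=D^{-1}$ forces $\hat{D}(\chi)$ to be real for real-valued sums, which is worth a sentence of justification rather than silence). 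Neither point is fatal, but both illustrate that the proposal is an annotated bibliography of the techniques one would use, not an execution of them.
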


More generally, we could pose the following problem:

\begin{qn}
Is it possible to classify the finite transitive permutation groups $G$ for
which some element of $D(G)$ cannot be written as the product of two
derangements?
\end{qn}

We note that, in a Frobenius group $G$, every non-identity element of $D(G)$
is a derangement.

\paragraph{Acknowledgment}
This work was begun when the first two authors were visiting The University of
Western Australia in 2016; they acknowledge with thanks support from UWA. The research of the last two authors is supported by the Australian Research Council Discovery Project DP200101951.

The first three authors would like to thank the Isaac Newton Institute for Mathematical Sciences, Cambridge, for support and hospitality during the programme
\textit{Groups, representations and applications: new perspectives},
where part of the research for this paper was done.
This work was supported by EPSRC grant no EP/R014604/1.

\end{document}